\documentclass[11pt]{article}

\usepackage[margin=1.15in]{geometry}
\usepackage{amsmath,amssymb,amsthm}
\usepackage{mathtools}
\usepackage{authblk}
\usepackage{pgfplots}
\pgfplotsset{compat=1.18}
\usepackage[colorlinks=true,linkcolor=black,citecolor=black]{hyperref}

\newtheorem{theorem}{Theorem}
\newtheorem{proposition}{Proposition}
\newtheorem{lemma}{Lemma}
\newtheorem{corollary}{Corollary}

\theoremstyle{remark}

\newcommand{\R}{\mathbb{R}}
\newcommand{\E}{\mathbb{E}}
\newcommand{\Prob}{\mathbb{P}}
\newcommand{\eps}{\varepsilon}

\title{A stochastic subgradient method with optimal failure exponent}
\date{\today}

\author[1]{Bart P.G.\ van Parys \thanks{bart.van.parys@cwi.nl}}
\affil[1]{CWI Amsterdam}

\begin{document}
\maketitle

\begin{abstract}
Fix a target accuracy $\eps$, a gradient--noise level $s$, and a horizon $N$.
We wish to design algorithms which minimize the probability of observing a suboptimality gap
which exceeds the target accuracy, i.e.,
$\mathcal E_N=-\log\sup_{f,P}\Prob_P(f(x_A)-f_\star\ge\eps)$,
with the noise law $P$ known only to be sub--Gaussian.
We single out a uniformly averaged schedule which is harmonic (of the form $h_k=R^2/(\eps\,(N+m-k))$)
and prove, via an optimized exponential supermartingale argument, that it attains the
optimal exponent $\mathcal E_N^\star = \eps^2N(1+o(1))/(2R^2s^2)$.
Optimality is certified by a matching impossibility result: under Gaussian noise, a
gradient--masking change of measure caps the exponent of every algorithm at the
same leading order.
In a small--noise limit, our setting degenerates into the
adversarial--error model of \cite{gosgens} for subgradient methods.
\end{abstract}

\section{Introduction}\label{sec:intro}

We consider the generic problem of minimizing a nonsmooth convex function given access
only to stochastic subgradients. Let $\mathcal C=\mathcal C(R,L)$ be the class of
nonsmooth convex functions $f:\R^d\to\R$ admitting a minimizer $x_\star$ achieving $f_\star=f(x_\star)$ with
$\|x_0-x_\star\|\le R$ and subgradients bounded by $\|g\|\le L$ everywhere. A black box algorithm
may query, at any point $x$, a stochastic subgradient oracle
\[
  \tilde g\;=\;g+e,\qquad g\in\partial f(x),
\]
and after $N$ queries must return a point $x_A$ with suboptimality gap smaller than a target accuracy $\eps$.
Here the subdifferential
$\partial f$ is set-valued and the returned subgradient is a measurable selection chosen by
the oracle.
We assume that under its law $P$ the noise sequence is adapted and
conditionally sub--Gaussian. That is, with $\mathcal F_k=\sigma(\omega,e_0,\dots,e_{k-1})$ the filtration
generated by the noise sequence $e_0,\dots,e_{k-1}$ together with any internal
randomness $\omega$ available to the algorithm (its random coins, drawn independently
of the noise before the run), we have
\begin{equation}\label{eq:subg}
  \E_P\big[\exp\langle a,e_k\rangle\,\big|\,\mathcal F_k\big]
  \;\le\;\exp\Big(\frac{s^2\|a\|^2}{2}\Big)
  \qquad\text{for all }a\in\R^d ,
\end{equation}
which in particular implies $\E_P[e_k\mid\mathcal F_k]=0$. The stochastic process class contains in particular i.i.d.\
Gaussian noise of variance at most $s^2$ per coordinate. In fact, for
$e_k\sim N(0,s^2I_d)$ the bound \eqref{eq:subg} is tight.

Fix a target accuracy $\eps\in(0,RL)$. The law $P$ is not assumed
known and the object of interest
is the exponent
\[
  \mathcal E_N\;:=\;-\log\,\sup_{f\in\mathcal C}\ \sup_{P}\ \Prob_P\big(f(x_A)-f_\star\ge\eps\big),
\]
of the probability that the algorithm fails to return a solution with suboptimality gap smaller than $\eps$. Here, the inner supremum is running over all adapted laws obeying \eqref{eq:subg}. When a
single law is fixed and clear from the context we drop the dependence on $P$ and
write simply $\E$ and $\Prob$. This
criterion is distinct from the more common expected gap criterion which desires $\E[f(x_A)-f_\star]$ to be small.
Nevertheless, our criterion is not foreign to the classical literature. For instance, \cite{njls} accompany 
their expected suboptimality gap analysis with bounds on {probabilities of large deviations}.
That an expectation is a crude summary of a random loss has since been argued from
several directions. Expectation analysis sees the noise of SGD only through its
variance, missing higher moments \cite{bajovic}, and is blind to tail performance \cite{mvs,bongole}.
Algorithms optimized in expectation may even carry a fragile loss distribution, as
\cite{fanglynn} demonstrate for bandit algorithms; also in our setting this will be observed, as the next paragraph points out.
Holding the target $\eps$ fixed, rather than driving it to zero, is moreover
the natural regime for first--order methods. Indeed, in learning problems the
accuracy worth pursuing is bounded below by the statistical error of the
problem, below which further optimization is futile \cite{bottou}.
In this paper we elevate maximizing the failure exponent $\mathcal E_N$ to the main algorithmic design objective.

\paragraph{Classical subgradient method.}

The classical theory \cite{nemyud,njls} 
bounds the second moment of the noisy subgradients as
$\E[\|g_k+e_k\|^2\mid\mathcal F_k]\le M^2$.
In the present oracle model the iterate $x_k$, and hence the selection $g_k$, is
$\mathcal F_k$--measurable, so the cross term vanishes as $\E[e_k\mid\mathcal F_k]=0$
while \eqref{eq:subg} gives $\E[\|e_k\|^2\mid\mathcal F_k]\le ds^2$, so that we may take
$M^2=L^2+ds^2$. For
the expected gap a constant stepsize
\begin{equation}
  \label{eq:expectation-optimal-steps}
  h_k\equiv R/(M\sqrt{N+1})=R/\sqrt{(N+1)(L^2+ds^2)}
\end{equation}
with uniform averaging $x_A=\sum_{k=0}^{N}x_k/(N+1)$ attains the expected suboptimality gap guarantee
$$\E[f(x_A)-f_\star]\le RM/\sqrt{N+1},$$ minimax--optimal up to constants
under a bounded--variance oracle assumption \cite{nemyud,agarwal}. In our metric such constant stepsize schedules will prove polynomially
sub--optimal (Lemma~\ref{lem:classical}); conversely, the schedule maximizing
$\mathcal E_N$ will turn out to hold its worst--case expected gap at the scale
$\eps$ of the target rather than drive it to zero (Lemma~\ref{lem:price}).
The two criteria are thus fundamentally distinct.

\paragraph{Stochastic subgradient method.}

High--probability bounds do follow from an expected--gap guarantee by a
simple application of Markov's inequality,
\(
  \Prob\big(f(x_A)-f_\star\ge\eps\big)\;\le\; (\E[f(x_A)-f_\star])/\eps
  \;\le\;{RM}/(\eps\sqrt{N+1}).
\)
However these bounds are crude and only establish that $\mathcal E_N$ is logarithmic in $N$.
Instead, \cite{liu} propose a constant stepsize schedule with uniform averaging and establish a high probability suboptimality gap guarantee.
Translated to our setting by matching their bound to our target accuracy
$\eps$, their stepsizes equate to
\begin{equation}\label{eq:liu-steps}
  h_k\equiv\frac{2R^2}{\eps N}
\end{equation}
and their guarantee certifies an exponent
$\mathcal E_N\geq\eps^2N/(48R^2s^2)(1+o(1))$, linear in the horizon $N$. For
this constant schedule our certificate of Corollary~\ref{cor:liu} sharpens the guarantee to
\[
  \mathcal E_N \geq \frac{\eps^2N}{8R^2s^2}\,\big(1+o(1)\big).
\]
Remark that $\mathcal E_N$ depends implicitly on the algorithm considered. We do not make this dependence on the algorithm explicit, but instead will write $\mathcal E^\star_N$ for its supremum over all possible algorithms. In Section~\ref{sec:lower}, we will come to show, via a gradient masking impossibility result, that
\[
  \mathcal E^\star_N = \frac{\eps^2N}{2R^2s^2}\,\big(1+ o(1)\big).
\]
Nevertheless, in Corollary~\ref{cor:constant} we will see that constant step size schedules with uniform averaging leave money on the table as their exponent is at most $\tfrac34\,\mathcal E^\star_N\,(1+o(1))$. To get the optimal exponent $\mathcal E^\star_N$ a larger class of subgradient methods needs to be considered instead.

\paragraph{The stochastic harmonic schedule.}

Both the classical \eqref{eq:expectation-optimal-steps} and stochastic subgradient \eqref{eq:liu-steps} methods are first--order methods which employ deterministic,
fixed step sizes $h_0,\dots,h_{N-1}\ge0$ with iterates
\begin{equation}\label{eq:method}
  x_{k+1}\;=\;x_k-h_k\,(g_k+e_k),\qquad g_k\in\partial f(x_k),
\end{equation}
for $k=0,\dots,N-1$, and output a weighted average
$$\textstyle x_A=\sum_{k=0}^{N}w_kx_k/W$$ with deterministic weights $w_k\ge0$ and normalization
$W=\sum_{k=0}^{N}w_k$.
Within this averaged subgradient class the algorithm design problem reduces to choosing a pair of steps and weights maximizing the exponent $\mathcal E_N$.
The optimum will be identified as the stochastic harmonic schedule using steps
\begin{equation}\label{eq:harmonic}
  h_k=\frac{R^2}{\eps\,(N+m-k)},\qquad k=0,\dots,N-1,\qquad m=\lceil\sqrt{2N}\,\rceil,
\end{equation}
again using uniform averaging. The steps are harmonic in the {remaining} horizon and grow from
$R^2/(\eps N)$ to $R^2/(\eps\sqrt{2N})$ as the horizon runs out (Figure~\ref{fig:schedule}).
An adapted exponential supermartingale, with tilts generated by a certain
recursion, certifies that
it attains the ceiling $\mathcal E^\star_N$ at every fixed noise level.

\begin{figure}[ht]
\centering
\begin{tikzpicture}
\begin{axis}[
  width=0.74\textwidth, height=0.45\textwidth,
  xlabel={$\theta=k/N$},
  ylabel={$h_k$},
  legend cell align=left,
  legend style={at={(0.03,0.88)}, anchor=north west, draw=black, fill=none, font=\small},
  xmin=0, xmax=1,
  ymax=1.1,
  xtick={0,0.2,0.4,0.6,0.8,1},
  ytick={0,1},
  yticklabels={$\frac{R^2}{\eps N}$,$\frac{R^2}{\eps\sqrt{2N}}$},
  extra y ticks={0.111111},
  extra y tick labels={$\frac{2R^2}{\eps N}$},
  extra y tick style={yticklabel style={anchor=south east},
    grid=major, major grid style={densely dotted, gray!80}},
  grid=major,
  major x grid style={dotted, gray!40},
  major y grid style={densely dotted, gray!80},
]
\addplot[blue, thick, samples at={0,...,199}] ({x/200}, {(200/(220-x)-1)/9});
\addlegendentry{Harmonic schedule~\eqref{eq:harmonic}}
\addplot[red, thick, dashed, domain=0:1, samples=2] {1/9};
\addlegendentry{Constant schedule~\eqref{eq:liu-steps}}
\end{axis}
\end{tikzpicture}
\caption{The stochastic harmonic schedule \eqref{eq:harmonic} and the
constant schedule \eqref{eq:liu-steps} of \cite{liu} against the normalized
time $\theta=k/N$, for $N=200$. The harmonic
steps remain of order $R^2/(\eps N)$ over most of the horizon and grow
harmonically to order $R^2/(\eps\sqrt{2N})$ over the final
$m=\lceil\sqrt{2N}\,\rceil$ iterations.}
\label{fig:schedule}
\end{figure}

\paragraph{The adversarial harmonic schedule.}
The form of the schedule \eqref{eq:harmonic} has a precedent. In the
adversarial--error model of \cite{gosgens}, where the gradient errors are chosen
adversarially under a total budget constraint
$\sum_{k=0}^{N-1}\|e_k\|^2\le\gamma^2$, a very similar looking schedule
emerges. First note however that averaged subgradient methods output
\[
  \textstyle x_A\;=\;x_0-\sum_{k=0}^{N-1}\alpha_k\,(g_k+e_k),\qquad
  \alpha_k\;=\;h_k\sum_{j>k}w_j\big/W ,
\]
and within the admissible subfamily studied in \cite{gosgens} the adversarial guarantee depends on the schedule
only through its conic combination $\alpha$. Define $u(\sigma)\ge1$ as the unique solution to
\begin{equation}\label{eq:udef}
  \sigma^2\;=\;u(\sigma)^2-1-2\log u(\sigma),
\end{equation}
for $\sigma=\gamma/L$.
The two--sided
analysis of \cite{gosgens} shows that this single curve controls the problem. An
explicitly tuned conic combination $\alpha'$ keeps its suboptimality gap below
${RL}\,u(\sigma)/{\sqrt{N+1}}$ against every error of total budget at most
$L^2\sigma^2$ \cite[Lem.~2]{gosgens}, while no deterministic algorithm whose
trajectory only moves into the negative cone of the observed noisy subgradients \cite{fatkhullin} improves on
this by more than the relative contraction $\delta_N:=\tfrac52\log(N+1)/N$
\cite[Thm.~2 and Cor.~2]{gosgens}. Their tuned combination itself
reads
\[
  \alpha'_k\;=\;\frac{R}{L}\cdot\frac{N-k}{(N+1)^{3/2}}
  \cdot\frac{u(\sigma)\,\xi_N(\sigma)}{u(\sigma)^2-(u(\sigma)^2-1)\frac{k}{N+1}}\,,
  \qquad k<N,
\]
with a correction factor $\xi_N(\sigma)$ obeying $1\le\xi_N(\sigma)\le1+\tfrac2N$ \cite[Lem.~2]{gosgens}.
This tuned combination can be realized with uniform averaging and stepsizes
$h_k=\alpha'_k(N+1)/(N-k)$; as its denominator equals
$\tfrac{u(\sigma)^2}{N+1}\big[N+1-(1-1/u(\sigma)^2)k\big]$, this gives
\[
  h_k\;=\;\frac{R\sqrt{N+1}}{L\,u(\sigma)}\cdot
    \frac{\xi_N(\sigma)}{N+1-(1-1/u(\sigma)^2)k}\,.
\]
Fix a level $\eps'\le\eps$ and match the budget so that
$RL\,u(\sigma)/\sqrt{N+1}=\eps'$; the prefactor is then
$R\sqrt{N+1}/(L\,u(\sigma))=R^2/\eps'$, and the tuned method of \cite{gosgens}
appears as
\begin{equation}\label{eq:gvpavg}
  h_k\;=\;\frac{R^2}{\eps'}\cdot\frac{\xi_N(\sigma)}{N+1-(1-1/u(\sigma)^2)k}\,,
  \qquad k=0,\dots,N-1.
\end{equation}
To distinguish it from
\eqref{eq:harmonic}, we call schedule \eqref{eq:gvpavg} with uniform averaging
the adversarial harmonic schedule matched at $\eps'$. The choice $\eps'=\eps$
matches the target exactly; the freedom to undershoot it slightly is used in
Section~\ref{sec:smallnoise}.

\paragraph{The small--noise limit.}
The resemblance between \eqref{eq:harmonic} and \eqref{eq:gvpavg} is not
coincidental, and can be made formal through a small--noise limit. The results
discussed so far pertain to a regime of a fixed noise level $s$ and a growing
horizon $N$; a complementary regime (horizon $N$ fixed, $s\downarrow0$) is
treated in Section~\ref{sec:smallnoise}. There failure becomes a rare event
and large deviation intuition suggests only the most likely way of failing
needs to be accounted for. An admissible law can place probability
$\exp\big(-(1+o(1))\sum_{k=0}^{N-1}\|e^\star_k\|^2/(2s^2)\big)$ on a given
corruption path $e^\star=(e^\star_0,\dots,e^\star_{N-1})$ and \eqref{eq:subg}
permits no more. The adversarial--error model of \cite{gosgens} hence falls
out of ours as its small--noise rate function, exactly so at fixed horizon
(Proposition~\ref{prop:limit}).

\subsection{Related work}
Closest in regime to the design problem is \cite{bajovic} which derives a genuine large deviation property for the last iterate
in growing iteration count for smooth strongly convex $f$ under the standard
$h_k \equiv h/k$ schedule for a single known noise law.
Also close are \cite{armackihp,armacki},
which pose the fixed--threshold
question for non--convex objective functions in the gradient--norm metric; the latter attains $N/\log N$-speed under decaying
$h_k\equiv h/\sqrt{k+1}$ steps. For strongly convex objective functions, \cite{armackihp} further derives a
large--deviation upper bound on the squared error of the Polyak--Ruppert average, under $h/(k+1)^{r}$ steps with $r\in(1/2,1)$.

In the high--probability literature guarantees hold at a prescribed
confidence level $1-\delta$. Any exponent bound translates into this format at
once, a reliability $1-\delta$ being reached as soon as
$\mathcal E_N\ge\log(1/\delta)$. Read this way, the stochastic harmonic
schedule attains a prescribed confidence at the target $\eps$ within
$2R^2s^2\log(1/\delta)/\eps^2\,(1+o(1))$ queries, no algorithm does so with a
smaller constant (Theorem~\ref{thm:universal}), every constant schedule
requires at least $4/3$ times as many (Lemma~\ref{lem:constant}), and the
matched constant steps \eqref{eq:liu-steps} of \cite{liu} are certified with
four times as many (Corollary~\ref{cor:liu}); the light--tail bounds of
\cite{njls} yield only $\sqrt N$--order exponents at fixed $\eps$. For
non--convex smooth objective
functions \cite{ghadimilan} already reach a $\log(1/\delta)$ dependence, though
by running independent copies of the method and selecting among them in a
post--optimization phase rather than by tuning a single trajectory.
In a related strongly convex setting, \cite{harvey} show a linear dependence on
$\log(1/\delta)$ to be unavoidable for the last iterate and for suffix averages
under the standard $h_k\equiv h/k$ schedule. Horizon--indexed
schedules appear for the last iterate of SGD in \cite{jain} at fixed confidence.

On the impossibility side, the classical oracle
complexity of the class is minimax {in expectation} \cite{nemyud,agarwal}; high--probability variants (minimax quantiles) have recently been developed for estimation \cite{mvs} and
for interactive decision making \cite{bongole}, though at fixed or polynomially small risk
levels, while the local asymptotic minimax theory of \cite{duchiruan} caps unrestricted
estimators at the CLT scale in the smooth regular case.

In Section~\ref{sec:smallnoise} the horizon is fixed and the noise level
$s\downarrow0$. Such small--noise asymptotics originate from the study of small
random perturbations of dynamical systems
\cite{schilder,varadhan,freidlinwentzell}, where the probability of a rare
exceedance decays exponentially in the inverse noise level at a rate fixed by the
most likely way the vanishing perturbation can bring the event about. The
analogue here is the most likely corruption path that drives the method past the
accuracy target.

\subsection*{Notation}

Throughout, $\|\cdot\|$ and $\langle\cdot,\cdot\rangle$ denote the Euclidean
norm and inner product on $\R^d$, $I_d$ the identity matrix, and
$N(\mu,\Sigma)$ the normal law. $\partial f$ is the subdifferential of $f$,
and $\mathrm{sgn}$ the sign function with the convention $\mathrm{sgn}(0):=1$.
We write $(x)_+:=\max\{x,0\}$ for the positive part, $\lceil x\rceil$ for the
ceiling, $\mathbf 1_A$ for the indicator of an event $A$, and $\Phi$ for the
standard normal distribution function, and $H_n:=\sum_{i=1}^ni^{-1}\ge\log n$
for the $n$th harmonic number. Logarithms are natural. Asymptotic
statements $o(\cdot)$ and $O(\cdot)$ refer to $N\to\infty$ at
fixed problem parameters $(\eps,R,L,s,d)$ unless indicated otherwise.

\subsection{Outline}

Section~\ref{sec:classical} shows that the classical designs fall short.
The expectation--optimal constant step schedule reaches exponents only of order
$\sqrt N$ (Lemma~\ref{lem:classical}), no constant schedule can exceed
three quarters of the optimal exponent (Corollary~\ref{cor:constant}), and
the anytime schedule $h/\sqrt{k+1}$ pays a further logarithmic factor
(Lemma~\ref{lem:anytime}). Section~\ref{sec:schedule} then states our
achievability theorem (Theorem~\ref{thm:main}) and proves it by building an
exponential supermartingale certificate (Proposition~\ref{prop:cert}).
Section~\ref{sec:lower} establishes the matching impossibility result: a subgradient--canceling
change of measure caps every algorithm at
$K+\sqrt{6K}+\log 6$ with $K=\eps^2N/(2R^2s^2)$ (Theorem~\ref{thm:universal}).
Section~\ref{sec:smallnoise} closes with an adversarial small--noise limit: at
fixed horizon the small--noise rate is exactly the adversarial guarantee threshold
(Proposition~\ref{prop:limit}), which for the adversarial harmonic schedule
\eqref{eq:gvpavg} is provided by Lemma~\ref{lem:uniform} and shown unimprovable among
methods satisfying the cone condition of \cite{fatkhullin} up to a factor $O(\log N/N)$ in Lemma~\ref{lemma:small-noise-lb}.

\section{Classical schedules}\label{sec:classical}

This section looks at several classical schedule designs in terms of their failure exponent $\mathcal E_N$.
Throughout the paper, the resisting instances realizing the outer supremum in
$\mathcal E_N$ are drawn from the family
$f(x)=\beta\,|x-x_\star|$ with slope $\beta\in(0,L]$ and minimizer $x_\star$
placed, as $\mathcal C$ requires, within $|x_0-x_\star|\le R$ of the start; the
iterates are accordingly univariate, $x_k\in\R$. Every negative result below is
established on a member of this family, paired with a noise law tailored to the
algorithm under attack, and we keep these proofs as uniform as possible so as
to highlight the universal hardness of this type of function. The argument (a certain failure under a tailored law, transferred to the honest noise by a
change of measure) is also the one which will establish our main impossibility
result in Section~\ref{sec:lower}.

We start by looking at schedules with constant steps $h_k\equiv h/\sqrt{N+1}$ with $h>0$ fixed, the
scaling of the expectation--optimal schedule \eqref{eq:expectation-optimal-steps}.
The next lemma shows that the exponent $\mathcal E_N$ is capped here at $O(\sqrt N)$.

\begin{lemma}\label{lem:classical}
Let the noise be independent Gaussian, $e_k\sim N(0,s^2)$, and run the constant schedule
$h_k\equiv h/\sqrt{N+1}$ with uniform averaging, where the scaling $h>0$ is fixed. Then
\[
  \mathcal E_N\;\le\;\frac{4\eps\sqrt{N+1}}{h\,s^2}\,\big(1+o(1)\big),
\]
as $N\to\infty$ at fixed $\eps$ and $s$.
\end{lemma}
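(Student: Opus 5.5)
The plan is to exhibit one resisting pair $(f,P)$, with $f$ from the family $f(x)=\beta|x-x_\star|$ and $P$ the honest Gaussian law $e_k\sim N(0,s^2)$, whose failure probability is at least $\exp(-\tfrac{4\eps\sqrt{N+1}}{hs^2}(1+o(1)))$. The argument has two parts. First, under a tailored law $Q$ with adapted mean shifts, failure is almost certain. Second, we transfer back to $P$ through the standard change-of-measure inequality
\[
\log\Prob_P(A)\;\ge\;-\frac{\mathrm{KL}(Q\|P)+e^{-1}}{Q(A)} .
\]
Here the Gaussian chain rule gives $\mathrm{KL}(Q\|P)=\E_Q\sum_k\mu_k^2/(2s^2)$ when $e_k\sim N(\mu_k,s^2)$ under $Q$ with $\mu_k$ being $\mathcal F_k$--measurable.

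\textbf{Instance and tailored law.} Place the minimizer at the start, $x_\star=x_0=0$, with a small slope $\beta\le L$ to be tuned with $N$; failure is the event $A=\{|x_A|\ge\eps/\beta\}$. Under $Q$ let the noise have mean $\mu_k=-g_k-c$. The shift $-g_k$ exactly masks the subgradient $g_k=\beta\,\mathrm{sgn}(x_k)$, and $-c$ adds a constant drift. Under $Q$ the iterates then form a pure random walk with drift $+h c/\sqrt{N+1}$ per step, so that
\[
x_A \;=\; c\sum_{k<N}\alpha_k \;-\;\sum_{k<N}\alpha_k z_k ,\qquad z_k\sim N(0,s^2)\ \text{i.i.d.},
\]
with $\alpha_k=\tfrac{h}{\sqrt{N+1}}\tfrac{N-k}{N+1}$. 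This gives $\sum_k\alpha_k=\tfrac{hN}{2\sqrt{N+1}}\approx h\sqrt N/2$ and $\sum_k\alpha_k^2\approx h^2/3=O(1)$.

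\textbf{Certainty of failure under $Q$.} Choose $c=(1+\eta)\,2\eps/(\beta h\sqrt N)$ for a small fixed $\eta>0$. The mean of $x_A$ then exceeds $\eps/\beta$ by $\eta\eps/\beta$. The tuned $\beta$ will scale as $N^{-1/4}$, so this margin grows like $N^{1/4}$, while the standard deviation of $x_A$ stays $O(hs)$. Hence $Q(A)\to1$.

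\textbf{Cost.} Since $\mu_k^2\le 2g_k^2+2c^2\le2\beta^2+2c^2$ holds deterministically,
\[
\mathrm{KL}(Q\|P)\;\le\;\frac{N\beta^2}{s^2}+\frac{Nc^2}{s^2}
\;=\;\frac{N\beta^2}{s^2}+\frac{4(1+\eta)^2\eps^2}{\beta^2h^2s^2}.
\]
Minimizing over $\beta$ gives $\beta^4=4(1+\eta)^2\eps^2/(h^2N)$, which tends to zero, so the constraint $\beta\le L$ holds for large $N$ and $|x_0-x_\star|=0\le R$. The minimal value is $2\sqrt{N\cdot4(1+\eta)^2\eps^2/h^2}/s^2=4(1+\eta)\eps\sqrt N/(hs^2)$. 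Plugging this into the change-of-measure inequality together with $Q(A)\to1$, and then letting $\eta\downarrow0$, yields the claimed bound. The factor $4$ comes from the crude splitting $(a+b)^2\le2a^2+2b^2$; a finer argument tracking the sign of $x_k$ would give $2$, but that is not needed here.

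\textbf{Main obstacle.} The delicate step is making the change of measure rigorous with an adaptive shift $\mu_k$ that depends on $\mathrm{sgn}(x_k)$. Two things must be checked. First, $Q$ must be a legitimate adapted law, so that the chain rule for KL applies with a deterministic bound. Second, the uniform $\eta$--margin in $Q(A)$ must hold even though the masking makes the iterate law independent of $\beta$ while the target threshold $\eps/\beta$ moves with $N$. Both reduce to the explicit Gaussian computation above, because under $Q$ the averaged iterate is exactly Gaussian.
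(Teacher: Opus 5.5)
Your proof is correct and reaches the same constant $4$. The tilted law is essentially the paper's; what differs is how failure is transferred back to the honest law.

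\textbf{Comparison with the paper.} The paper does not mask. It uses the deterministic drift $\mu_k=-\gamma/\sqrt{N+1}=-2\beta$ and absorbs the subgradient through the one--sided bound $\beta\,\mathrm{sgn}(z_k)\le\beta$. It needs failure only on $\{T\le0\}$, of $Q$--probability $\tfrac12$. It then bounds $dP/dQ=e^{-\xi/s^2-E/(2s^2)}$ pathwise on $\{\xi\le\gamma s\}$, which works because a deterministic drift makes $\xi$ exactly Gaussian. The Fr\'echet bound then gives the explicit finite--$N$ estimate $\mathcal E_N\le\gamma^2/(2s^2)+\gamma/s+\log3$.

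At your optimizing slope one has $c=\beta$. Your $Q$ therefore coincides with the paper's whenever $x_k\ge0$ and differs only on $\{x_k<0\}$, where your shift vanishes. Your masking, in the spirit of Theorem~\ref{thm:universal}, makes $x_A$ exactly Gaussian under $Q$. The KL chain rule then lets you pay for the adapted shift through $\E_Q\sum_k\mu_k^2$ alone, without analyzing the martingale part of the log--likelihood ratio. The price is that the KL route divides by $Q(A)$. You therefore need $Q(A)\to1$, hence the margin $\eta$ and the final limit $\eta\downarrow0$, with an $o(1)$ that depends on $\eta$. You also obtain only the asymptotic statement, not an explicit bound.

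\textbf{First correction: the change--of--measure inequality.} The inequality $\log\Prob_P(A)\ge-(\mathrm{KL}(Q\|P)+e^{-1})/Q(A)$ is false as written. On a two--point space with $Q(A)=\tfrac12$ and $\Prob_P(A)\to0$ it fails by an additive amount tending to $2(\log2-e^{-1})$. Use instead either of the correct forms:
\begin{itemize}
\item $\log\Prob_P(A)\ge-(\mathrm{KL}(Q\|P)+\log2)/Q(A)$, from data processing on $\{A,A^c\}$;
\item $\log\Prob_P(A)\ge\log Q(A)-(\mathrm{KL}(Q\|P)+e^{-1})/Q(A)$, from Jensen.
\end{itemize}
Either changes only an $O(1)$ term, so your conclusion stands because $Q(A)\to1$.

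\textbf{Second correction: the constant $2$.} Your closing claim that tracking the sign of $x_k$ would improve $4$ to $2$ is wrong. At the optimum $c=\beta\asymp N^{-1/4}$. Under $Q$ the walk has $x_k<0$ for only $O(\sqrt N)$ indices in expectation, and each such index saves only $4\beta c\asymp N^{-1/2}$. Hence $\E_Q\sum_k\mu_k^2=N(2\beta^2+2c^2)-O(1)$, and your splitting is asymptotically tight. A genuine improvement, to the constant $3$, comes instead from shaping the drift proportionally to the conic weights $\alpha_k$, as in the proof of Lemma~\ref{lem:anytime}.
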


\begin{proof}
Write $h_0=h/\sqrt{N+1}$ for the common step.
Let $\gamma>0$ be arbitrary, set $\beta:=\gamma/(2\sqrt{N+1})$,
and take the instance $f(x)=\beta\,|x-x_\star|$ with $x_\star=x_0$, with gradient oracle
returning the subgradient selection
$g(x)=\beta\,\mathrm{sgn}(x-x_\star)\in\partial f(x)$. The instance belongs to $\mathcal C$ whenever
$\beta\le L$, a constraint verified at the end of the proof for the value of $\gamma$
selected there. Let $Q$ be the law of the run under
$e_k=\mu_k+\eta_k$, $k<N$, with the deterministic drift $\mu_k:=-\gamma/\sqrt{N+1}$, of
total energy $E:=\sum_{k=0}^{N-1}\mu_k^2=N\gamma^2/(N+1)\le\gamma^2$, and $\eta_k\sim N(0,s^2)$ i.i.d.

Write $z_k:=x_k-x_0$. The subgradient at $x_k$ is $g_k=\beta\,\mathrm{sgn}(z_k)$,
while under $Q$ the noise is $e_k=-\gamma/\sqrt{N+1}+\eta_k$, so the iteration
\eqref{eq:method} reads
\[
  z_{k+1}\;=\;z_k-h_0\big(g_k+e_k\big)
  \;=\;z_k+h_0\Big(\frac{\gamma}{\sqrt{N+1}}-\beta\,\mathrm{sgn}(z_k)\Big)-h_0\eta_k ,
\]
and since $\beta\,\mathrm{sgn}(z_k)\le\beta=\gamma/(2\sqrt{N+1})$,
\[
  z_{k+1}\;\ge\;z_k+\frac{h_0\gamma}{2\sqrt{N+1}}-h_0\eta_k ,
\]
so by induction $z_k\ge kh_0\gamma/(2\sqrt{N+1})-h_0W_k$ with $W_k:=\sum_{j<k}\eta_j$.
Averaging over the iterates gives
\[
  x_A-x_0\;=\;\frac1{N+1}\sum_{k=0}^{N}z_k
  \;\ge\;\frac{h_0\gamma}{2\sqrt{N+1}}\cdot\frac N2-h_0T ,
  \qquad T:=\frac1{N+1}\sum_{k=0}^{N}W_k .
\]
Consider the event $B:=\{T\le0\}$, of $Q$--probability at least $\tfrac12$ as
$T$ is a centered Gaussian under $Q$. On $B$, multiplying by the slope
$\beta=\gamma/(2\sqrt{N+1})$,
\[
  f(x_A)-f_\star\;\ge\;\beta\,(x_A-x_0)\;\ge\;
  \frac{\gamma^2h_0}{8}\cdot\frac{N}{N+1}\,,
\]
so the explicit choice
\[
  \gamma^2\;:=\;\frac{8\eps}{h_0}\cdot\frac{N+1}N
  \;=\;\frac{8\eps\sqrt{N+1}}{h}\,\Big(1+\frac1N\Big)
\]
makes failure at level $\eps$ certain on $B$. For this choice the deferred
membership constraint $\beta\le L$, i.e., $\gamma^2\le4L^2(N+1)$, reads
$h_0N\ge2\eps/L^2$ and so holds for all $N$ large enough at fixed
$h$, $\eps$ and $s$.

Finally we change measure to the honest law $P$, under which
$e_k\sim N(0,s^2)$ i.i.d. Both runs are driven by the algorithm's random coins
$\omega$ (independent of the noise, with a law $\rho$ common to $P$ and $Q$) together
with the noise stream $(e_0,\dots,e_{N-1})$, with the filtration
$\mathcal F_k=\sigma(\omega,e_0,\dots,e_{k-1})$ of Section~\ref{sec:intro}. Because
each $e_k$ enters only through its conditional law given $\mathcal F_k$, the joint
law of $(\omega,e_0,\dots,e_{N-1})$ disintegrates along the filtration into the coin
marginal times the successive one--step conditional laws of $e_k$ given
$\mathcal F_k$,
\[
  d(\cdot)\;=\;\rho(d\omega)\prod_{k=0}^{N-1}(\cdot)\big(de_k\mid\mathcal F_k\big) .
\]
Under $P$ the conditional of $e_k$ given $\mathcal F_k$ is $N(0,s^2)$ and under $Q$
it is $N(\mu_k,s^2)$, here with the deterministic drift $\mu_k$ as mean; writing
$\varphi_s(u)\propto e^{-u^2/(2s^2)}$ for the $N(0,s^2)$ density these one--step
conditionals have densities $\varphi_s(e_k)$ and $\varphi_s(e_k-\mu_k)$, so the
factorization reads
\[
  dP\;=\;\rho(d\omega)\prod_{k=0}^{N-1}\varphi_s(e_k)\,de_k ,
  \qquad
  dQ\;=\;\rho(d\omega)\prod_{k=0}^{N-1}\varphi_s(e_k-\mu_k)\,de_k .
\]
Using, under $Q$, $e_k=\mu_k+\eta_k$ with
$\eta_k\sim N(0,s^2)$,
\[
  \log\frac{dP}{dQ}
  \;=\;\sum_{k=0}^{N-1}\log\frac{\varphi_s(e_k)}{\varphi_s(e_k-\mu_k)}
  \;=\;\sum_{k=0}^{N-1}\frac{(e_k-\mu_k)^2-e_k^2}{2s^2}
  \;=\;-\frac{\xi}{s^2}-\frac{E}{2s^2},
\]
with $\xi:=\sum_{k=0}^{N-1}\eta_k\mu_k$ and $E\le\gamma^2$ the
energy of the drift introduced above. As the drift is deterministic and the
$\eta_k$ are independent and centered, $\xi$ is under $Q$ a centered Gaussian of
variance
\[
  \mathrm{Var}_Q(\xi)\;=\;\sum_{k=0}^{N-1}\mu_k^2\,\mathrm{Var}_Q(\eta_k)
  \;=\;s^2\sum_{k=0}^{N-1}\mu_k^2\;=\;s^2E ,
\]
so that $\xi/(s\sqrt E)$ is standard normal and, using $E\le\gamma^2$,
$Q(\xi>\gamma s)=1-\Phi(\gamma/\sqrt E)\le1-\Phi(1)\le0.16$. On the event
$\{\xi\le\gamma s\}$ the likelihood ratio obeys
$dP/dQ\ge e^{-\gamma/s-\gamma^2/(2s^2)}$, whence, as failure at level $\eps$ is
certain on $B$,
\begin{align}
  \Prob_P\big(f(x_A)-f_\star\ge\eps\big)
  \;&=\;\E_Q\Big[\mathbf 1_{\{f(x_A)-f_\star\ge\eps\}}\,\frac{dP}{dQ}\Big]\nonumber\\
  \;&\ge\;e^{-\gamma^2/(2s^2)-\gamma/s}\;
  Q\big(B\cap\{\xi\le\gamma s\}\big)
  \;\ge\;\tfrac13\,e^{-\gamma^2/(2s^2)-\gamma/s},\label{eq:changeofmeasure}
\end{align}
the last step by the Fr\'echet lower bound
$Q(B\cap\{\xi\le\gamma s\})\ge Q(B)+Q(\xi\le\gamma s)-1\ge\tfrac12+0.84-1\ge\tfrac13$.
Therefore
$\mathcal E_N\le\gamma^2/(2s^2)+\gamma/s+\log3$, and since
$\gamma/s=o(\gamma^2/s^2)$ as $N\to\infty$, substituting
$\gamma^2=8\eps\sqrt{N+1}/h\,(1+\tfrac1N)$ gives the display.
\end{proof}

We now move to constant steps proposed by \cite{liu} at the scale $R^2/(\eps N)$ used in Equation
\eqref{eq:liu-steps}. As discussed in the introduction, this scaling allows 
reaching exponents of order $\mathcal E_N = O(N)$. The optimal exponent $\mathcal E_N^\star$, however, remains out of
reach for constant schedules.

\begin{lemma}\label{lem:constant}
Let the noise be independent Gaussian, $e_k\sim N(0,s^2)$, and run the constant
schedule $h_k\equiv hR^2/(\eps N)$ with uniform averaging, where $h>0$ is
fixed and $\eps\le\min\{h,\sqrt{h/2}\}\,RL$. Then
\[
  \mathcal E_N\;\le\;\frac{3\big(\sqrt{2h}-1\big)_+^2}{h^2}\cdot
  \frac{\eps^2N}{2R^2s^2}\,\big(1+o(1)\big),
\]
as $N\to\infty$ at fixed $\eps$ and $s$.
\end{lemma}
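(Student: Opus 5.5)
The plan is to reuse the template of Lemma~\ref{lem:classical}. We use a one--dimensional instance $f(x)=\beta|x-x_\star|$ and a tilted Gaussian law $Q$ with a deterministic drift $\mu_k$ under which failure has probability bounded below. We then transfer back to the honest law $P$ by the same change of measure and Fr\'echet bound as in \eqref{eq:changeofmeasure}. This costs $E/(2s^2)+O(\sqrt E/s)+\log 3$ with $E=\sum_k\mu_k^2$. The new ingredient is the choice of $x_\star$, $\beta$ and the drift profile so that $E/(2s^2)$ produces exactly the stated constant.

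\textbf{Instance and domination.} Put $x_\star=x_0+R$, write $h_0=hR^2/(\eps N)$, and use the selection $g(x)=\beta\,\mathrm{sgn}(x-x_\star)$. The sign of $g_k$ is irrelevant here: since $-g_k\le\beta$ always, induction gives
\[
x_k\le x_0+h_0\sum_{j<k}(\beta-e_j),
\]
with no need to track crossings of $x_\star$. Uniform averaging then gives
\[
x_A\le x_0+\sum_{k<N}\alpha_k(\beta-\mu_k)-\sum_k\alpha_k\eta_k,\qquad \alpha_k=h_0\frac{N-k}{N+1}.
\]
Failure at level $\eps$ holds as soon as $x_\star-x_A\ge\eps/\beta$. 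On the event $B=\{\sum_k\alpha_k\eta_k\ge0\}$, which has $Q$--probability $\tfrac12$, it therefore suffices that
\[
\sum_k\alpha_k\mu_k\ \ge\ \Delta:=\beta\sum_k\alpha_k+\frac{\eps}{\beta}-R .
\]
If $\Delta\le0$ (the case $\sqrt{2h}\le1$ after the choice of $\beta$ below), take $\mu\equiv0$. Then $P$ fails with probability at least $\tfrac12$ and $\mathcal E_N\le\log 2$, which matches the claim since $(\cdot)_+$ vanishes there.

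\textbf{Optimizing drift and slope.} For $\Delta>0$ we take the minimum--energy drift $\mu_k=\Delta\,\alpha_k/\sum_j\alpha_j^2$. This is the Cauchy--Schwarz extremizer, with energy $E=\Delta^2/\sum_k\alpha_k^2$. We then use the asymptotics $\sum_k\alpha_k=h_0N/2\,(1+o(1))$ and $\sum_k\alpha_k^2=h_0^2N/3\,(1+o(1))$. These give $\Delta=\beta hR^2/(2\eps)+\eps/\beta-R+o(1)$ and $E=3\Delta^2\eps^2N/(h^2R^4)\,(1+o(1))$. Minimizing $\Delta$ over $\beta$ gives
\[
\beta=\frac{\eps\sqrt2}{R\sqrt h},\qquad \Delta=R(\sqrt{2h}-1).
\]
Hence
\[
\frac{E}{2s^2}=\frac{3(\sqrt{2h}-1)^2}{h^2}\cdot\frac{\eps^2N}{2R^2s^2}\,(1+o(1)).
\]
Membership $\beta\le L$ is exactly the hypothesis $\eps\le\sqrt{h/2}\,RL$. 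The other hypothesis $\eps\le hRL$ I expect to be needed only to keep $\beta$ and $\Delta$ in the admissible range, e.g.\ to ensure $\Delta\le\beta\sum_k\alpha_k$, so that the drift is not asked to undo more than the nominal motion. I would check where it enters while writing out the details.

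\textbf{Change of measure and main obstacle.} The change of measure is verbatim from Lemma~\ref{lem:classical}. We have $\log(dP/dQ)=-\xi/s^2-E/(2s^2)$ with $\xi$ centered Gaussian of variance $s^2E$ under $Q$. Taking the event $\{\xi\le s\sqrt E\}$ and the Fr\'echet bound gives
\[
\Prob_P(\text{fail})\ge\tfrac13\,e^{-E/(2s^2)-\sqrt E/s},
\]
and $\sqrt E/s=o(E/s^2)$ since $E\asymp N$. The main obstacle is the bookkeeping of the $o(1)$ terms: replacing the sums of $\alpha_k$ and $\alpha_k^2$ by their asymptotics, and $\Delta$ by its limit, must not degrade the leading constant. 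This is handled by choosing $\mu$ with the exact finite-$N$ value of $\Delta$ and taking limits only at the end.
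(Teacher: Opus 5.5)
Your proposal is correct and, for $h>\tfrac12$, it is the paper's proof. You use the same instance $x_\star=x_0+R$ with slope $\beta=\eps/(qR)$ at $q=\sqrt{h/2}$, and the same drift: the paper's linearly decaying $\mu_k=\lambda(N-k)/N$ is exactly your minimum--energy drift $\propto\alpha_k$, and its $gR$ with $g=q-1+h/(2q)$ is your $\Delta$. The event $B$ and the change of measure are also the same. Since $\sum_k\alpha_k$ and $\sum_k\alpha_k^2$ are available in closed form, the $o(1)$ bookkeeping you flag is harmless.

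The one place to tighten is $h\le\tfrac12$. Read literally, the stated bound is then $0\cdot K(1+o(1))=0$, whereas your zero--drift Gaussian argument only gives $\mathcal E_N\le\log2$. The paper instead takes the noiseless oracle, which is admissible in the supremum over $P$ defining $\mathcal E_N$, with slope $\beta=\eps/(hR)$. The iterates then advance exactly $R/N$ per step, the gap is $\eps/(2h)\ge\eps$ deterministically, and $\mathcal E_N=0$.

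That is the only use of the hypothesis $\eps\le hRL$; note that $\min\{h,\sqrt{h/2}\}=h$ precisely when $h\le\tfrac12$. Your guess that it is needed to ensure $\Delta\le\beta\sum_k\alpha_k$ is off, since your domination $x_k\le x_0+h_0\sum_{j<k}(\beta-e_j)$ holds unconditionally. In fact, running your own slope $\beta=\eps\sqrt2/(R\sqrt h)$ with zero noise gives certain failure whenever $\Delta\le0$. So your route proves the lemma under the weaker hypothesis $\eps\le\sqrt{h/2}\,RL$ alone.
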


\begin{proof}
Write $h_0:=hR^2/(\eps N)$ for the common step and
$K:=\eps^2N/(2R^2s^2)$.

\emph{The case $h\le\tfrac12$.} Consider the instance
$f(x)=\beta\,|x-x_\star|$ with minimizer $x_\star:=x_0+R$ and slope $\beta:=\eps/(hR)$,
which is in $\mathcal C$ as $\eps\le hRL$. We consider a noiseless oracle which returns the actual gradient with
selection $g(x)=\beta\,\mathrm{sgn}(x-x_\star)$. The run is then
deterministic and, as long as $x_k<x_\star$, the returned
gradient equals $-\beta$ and the iterate travels
$h_0\beta=R/N$ per step to the right. Indeed, we have $x_k-x_0=kR/N$ exactly, below $R$ at
every queried iterate $k<N$, with resulting
average iterate satisfying $x_A-x_0=R/2$.
Thus,
$f(x_A)-f_\star=\beta R/2=\eps/(2h)\ge\eps$.
Hence $\mathcal E_N=0$.

\emph{Let now $h>\tfrac12$.}
Fix a slope parameter $q>0$ with
$\beta:=\eps/(qR)\le L$, so that
\begin{equation}\label{eq:constant-identities}
  h_0\beta N\;=\;\frac hq\,R,\qquad \frac{\eps}{\beta}\;=\;qR .
\end{equation}
Take the instance
$f(x)=\beta\,|x-x_\star|\in\mathcal C$ with minimizer $x_\star:=x_0+R$, at
distance $R$ from the start. Let $Q$ be the law of the run under $e_k=\mu_k+\eta_k$,
$k<N$, with the linearly decaying drift
$\mu_k:=\lambda\,\frac{N-k}{N}$, $\lambda>0$ selected below, of total energy
\[
  E:=\sum_{k=0}^{N-1}\mu_k^2
  \;=\;\frac{\lambda^2}{N^2}\sum_{k=0}^{N-1}(N-k)^2
  \;=\;\lambda^2\,\frac{(N+1)(2N+1)}{6N}\,,
\]
and $\eta_k\sim N(0,s^2)$ i.i.d., and let the oracle return the selection
$g(x)=\beta\,\mathrm{sgn}(x-x_\star)$ as before. Write
$z_k:=x_k-x_0$ and $y_k:=R-z_k$, the distance still to be covered. The
selection obeys $g_k=\beta\,\mathrm{sgn}(z_k-R)\ge-\beta$, so
the iteration \eqref{eq:method} gives the recursive inequality
\[
  y_{k+1}\;\ge\;y_k-h_0\beta+h_0\lambda\,\frac{N-k}{N}+h_0\eta_k ,
\]
whence by induction $y_k\ge y_{0,k}+h_0W_k$ with $W_k:=\sum_{j<k}\eta_j$
and the noiseless path
\[
  y_{0,k}\;=\;R-h_0\beta k+h_0\lambda\,\frac{k(2N-k+1)}{2N}.
\]
Averaging the noiseless path exactly,
using $\sum_{k=0}^Nk=N(N+1)/2$ and
$\sum_{k=0}^Nk(2N-k+1)=N(N+1)(2N+1)/3$,
\[
  \frac1{N+1}\sum_{k=0}^{N}y_{0,k}
  \;=\;R-\frac{h_0\beta N}2+\frac{h_0\lambda(2N+1)}6 .
\]
Select $\lambda>0$ so that this average exactly meets the failure threshold,
\[
  \frac{h_0\lambda(2N+1)}6\;=\;\frac\eps\beta-R+\frac{h_0\beta N}2
  \;\overset{\eqref{eq:constant-identities}}{=}\;g\,R,
  \qquad g:=q-1+\frac h{2q},
\]
possible whenever $g>0$. With
$\lambda$ so selected, the energy of the
drift equates to
\[
  \gamma^2\;:=\;E
  \;=\;\frac{3g^2R^2}{h_0^2N}\cdot\frac{2(N+1)}{2N+1},
  \qquad
  \frac{3g^2R^2}{2h_0^2Ns^2}\;=\;\frac{3g^2}{h^2}\,K .
\]
\emph{Failure under $Q$.} Consider the event $B:=\{T\ge0\}$ with
$T:=\frac1{N+1}\sum_{k=0}^NW_k$; as $T$ is a centered Gaussian under $Q$,
$Q(B)\ge\tfrac12$. On $B$ the recursive inequality and the selection of
$\lambda$ give
\[
  f(x_A)-f_\star\;=\;\beta\,\Big|\frac1{N+1}\sum_{k=0}^{N}y_k\Big|
  \;\ge\;\beta\,\frac1{N+1}\sum_{k=0}^{N}y_{0,k}\;+\;\beta h_0T
  \;\ge\;\eps :
\]
failure is certain.

\emph{Change of measure.} Using the same change of measure argument as in
Lemma~\ref{lem:classical}, here with energy $E=\gamma^2$, we arrive again at
\eqref{eq:changeofmeasure}.
Assembling, with $\gamma^2/(2s^2)=\tfrac{3g^2}{h^2}K\big(1+\tfrac1{2N+1}\big)$ and
hence $\gamma/s=\tfrac gh\sqrt{6K\big(1+\tfrac1{2N+1}\big)}$,
\begin{equation}\label{eq:constant-master}
  \mathcal E_N\;\le\;\frac{3g^2}{h^2}\,K\Big(1+\frac1{2N+1}\Big)
  +\frac gh\,\sqrt{6K\Big(1+\frac1{2N+1}\Big)}+\log3,
\end{equation}
valid whenever $\beta\le L$ and $g>0$.

Take $q=\sqrt{h/2}$, then $\beta\le L$ is the hypothesis $\eps\le\sqrt{h/2}\,RL$ and
$g=\sqrt{2h}-1>0$, and
\eqref{eq:constant-master} yields the display, its lower--order terms
absorbed in the $(1+o(1))$.
\end{proof}

The prefactor $3(\sqrt{2h}-1)_+^2/h^2$ of the performance bound in Lemma \ref{lem:constant} attains its maximum $3/4$ at $h=2$, precisely the scaling \eqref{eq:liu-steps} suggested by \cite{liu}. The cap of $3/4$ is in fact universal to any constant step--size schedule, as the following result points out.

\begin{corollary}\label{cor:constant}
Let $\eps\le RL$, let the noise be Gaussian, $e_k\sim N(0,s^2)$, and run the constant
schedule $h_k\equiv h_0$ with uniform averaging, where $h_0>0$ is arbitrary
and may depend on $N$ and the problem parameters. Then
\[
  \mathcal E_N\;\le\;\frac34\cdot\frac{\eps^2N}{2R^2s^2}\,\big(1+o(1)\big),
\]
with $o(1)\to0$ as $N\to\infty$ at fixed $\eps$, $R$, $L$ and $s$, uniformly
in $h_0$.
\end{corollary}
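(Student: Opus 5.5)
Write $h_0=hR^2/(\eps N)$, so that $h=h_0\eps N/R^2>0$ is an arbitrary (possibly $N$-dependent) parameter. We then reuse the master inequality \eqref{eq:constant-master} from the proof of Lemma~\ref{lem:constant}. That inequality holds for any $q>0$ with $\beta=\eps/(qR)\le L$ and $g=q-1+h/(2q)>0$, and nothing in its derivation uses that $h$ is fixed. The difficulty is uniformity in $h_0$. We split into three regimes of $h$ and in each pick $q$ so that the bound is at most $\tfrac34K(1+o(1))$, with $K=\eps^2N/(2R^2s^2)$ and the $o(1)$ independent of $h$.

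\emph{Small $h$, namely $h\le\tfrac12$.} When $h\le\tfrac12$ we have $\eps\le RL\le RL/(2h)$, which is the membership condition needed for the first case of Lemma~\ref{lem:constant} with slope $\beta=\eps/(hR)$. Strictly, that case needs $\beta\le L$, i.e.\ $\eps\le hRL$, which fails for tiny $h$. In that situation I take the largest admissible slope $\beta=L$. The noiseless run then moves $h_0L$ per step, so the iterates stay within $Nh_0L=hR^2L/\eps\le R$ of $x_0$. The average lies at distance at least $R/2$ from $x_\star=x_0+R$, and the gap is at least $LR/2$. This is $\ge\eps$ whenever $\eps\le RL/2$. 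For $\eps\in(RL/2,RL]$ I instead take $q$ with $\beta=L$ in the master bound and check directly that it gives at most $\tfrac34K$. In every sub-case we get $\mathcal E_N=0$ or a bound below $\tfrac34K$ uniformly.

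\emph{Moderate $h$.} For $h$ in a compact range $[\tfrac12,H]$, choose $q=\max\{\sqrt{h/2},\eps/(RL)\}$. Since $\eps\le RL$, this gives $\beta\le L$. Whenever $q=\sqrt{h/2}$, the leading coefficient is $3(\sqrt{2h}-1)^2/h^2\le\tfrac34$. In the complementary case $\sqrt{h/2}<\eps/(RL)\le1$, convexity of $q\mapsto g(q)$ on $q\ge\sqrt{h/2}$ together with $q\le1$ gives $g\le h/2\le\ldots$, and one verifies $3g^2/h^2\le\tfrac34$ there as well. The lower-order terms $\tfrac gh\sqrt{6K}+\log3$ are $O(\sqrt K)$ with a constant uniform in $h$ on this range.

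\emph{Large $h$.} Once $h$ exceeds some $H$, keep $q=\sqrt{h/2}$. Then $\beta=\eps/(qR)\le L$ automatically for $h\ge2$, and the coefficient $3(\sqrt{2h}-1)^2/h^2\le 6/h$ tends to $0$. So the bound is again at most $\tfrac34K$ plus $O(\sqrt K)$, uniformly in $h$.

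Combining the three regimes gives $\mathcal E_N\le\tfrac34K+C\sqrt K+\log3$ with $C$ independent of $h_0$, which is the claim. The main obstacle is the small-$h$/large-$\eps$ corner, where the slope constraint $\beta\le L$ binds. There I would fall back on an explicit computation of $g$ at $q=\eps/(RL)$, using $\eps\le RL$.
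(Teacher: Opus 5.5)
Your overall strategy is sound: reuse \eqref{eq:constant-master} with $h=\eps Nh_0/R^2$ arbitrary, and choose $q$ subject to $\beta\le L$ and $g>0$. The moderate and large regimes are correct. Your convexity remark gives $0<g\le g(1)=h/2$, hence $3g^2/h^2\le\tfrac34$ and $g/h\le\tfrac12$.

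The corner you flag as the main obstacle, however, has a hole. For $h\le\tfrac12$, $\eps\in(RL/2,RL]$ and $q=\eps/(RL)$, the explicit computation gives $g=q-1+h/(2q)$. This is $\le0$ whenever $h\le2q(1-q)$, for example $q=0.6$, $h=0.1$. There \eqref{eq:constant-master} is simply unavailable, because its derivation selects a drift $\lambda>0$ with $h_0\lambda(2N+1)/6=gR$. So "check directly that it gives at most $\tfrac34K$" fails as stated.

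The conclusion survives, but it must be argued separately. The condition $g\le0$ says exactly that the drift-free run already fails. Noiselessly the gap is $\beta R\,(1-h/(2q))\ge\eps$, since the iterates stay below $x_\star$ as $h<q$. Under the Gaussian law the same bound holds on $\{T\ge0\}$, giving $\mathcal E_N\le\log2$. Separately, the opening sentence of that regime is confused: $\eps\le RL/(2h)$ is not the membership condition; $\eps\le hRL$ is, as you then note.

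More to the point, none of the case analysis is needed. The paper takes $q=1$ for every $h_0$. Then $\beta=\eps/R\le L$ is exactly the hypothesis $\eps\le RL$, and $g=h/2>0$ for every $h>0$. The coefficients $3g^2/h^2=\tfrac34$ and $g/h=\tfrac12$ are identically independent of $h_0$. So \eqref{eq:constant-master} yields $\tfrac34K\big(1+\tfrac1{2N+1}\big)+\tfrac12\sqrt{6K\big(1+\tfrac1{2N+1}\big)}+\log3$ in one line, uniformly in $h_0$.

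The corner where the slope constraint seemed to bind disappears, because $q=1$ never asks for a slope steeper than $\eps/R$. Your regime split does buy a sharper constant on parts of the range: zero for small $h$, and at most $6/h$ for large $h$. That refinement is essentially what Lemma~\ref{lem:constant} already records. For the uniform $\tfrac34$ cap it only adds cases. In two of them you also witness failure with a noiseless oracle rather than the Gaussian law named in the hypothesis. That is legitimate for $\mathcal E_N$ as a supremum over admissible laws, and replaceable by the event $\{T\ge0\}$ at a cost of $\log2$.
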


\begin{proof}
In the notation of the proof of Lemma~\ref{lem:constant}, write
$h:=\eps Nh_0/R^2$ and take $q=1$ in \eqref{eq:constant-master}. Then,
$\beta=\eps/R\le L$ is exactly the hypothesis $\eps\le RL$ and imposes no
condition on $h_0$, while $g=\tfrac h2>0$ for every $h_0>0$. The
prefactor $3g^2/h^2=\tfrac34$ together with the coefficient $g/h=\tfrac12$
are independent of $h_0$. The master inequality therefore reads
\[
  \mathcal E_N\;\le\;\tfrac34\,K\Big(1+\frac1{2N+1}\Big)
  +\tfrac12\sqrt{6K\Big(1+\frac1{2N+1}\Big)}+\log3
  \;=\;\tfrac34\,K\,\big(1+o(1)\big),
\]
uniformly in $h_0$.
\end{proof}

A final classical design is the anytime schedule $h_k=h/\sqrt{k+1}$ with
$h>0$ fixed. With uniform
averaging it guarantees an expected gap of order $RM\log N/\sqrt N$ at every
horizon simultaneously \cite{shamirzhang}. In our exponent $\mathcal E_N$, this anytime
schedule does not break the $\mathcal E_N=O(\sqrt{N})$ barrier and, compared to the
expectation--optimal step sizes \eqref{eq:expectation-optimal-steps}, incurs a
further $\log(N+1)$ factor.
The argument closely follows that of Lemma~\ref{lem:classical}, the decaying
steps entering only through the two sums $A$ and $S$ of their conic
combination.

\begin{lemma}\label{lem:anytime}
Let the noise be Gaussian, $e_k\sim N(0,s^2)$, and run the anytime
schedule $h_k=h/\sqrt{k+1}$ with uniform averaging. Then, for every $h>0$,
\[
  \mathcal E_N\;\le\;\frac83\cdot\frac{\eps\sqrt{N+1}}{h\,s^2\log(N+1)}\,
  \big(1+o(1)\big),
\]
with $o(1)\to0$ as $N\to\infty$ at fixed $h$, $\eps$ and $s$.
\end{lemma}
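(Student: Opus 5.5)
We follow the proof of Lemma~\ref{lem:classical} verbatim. Only the drift changes: it is now shaped to the conic combination of the anytime schedule. With uniform averaging the output is $x_A=x_0-\sum_{k<N}\alpha_k(g_k+e_k)$ with $\alpha_k=h_k(N-k)/(N+1)$. We introduce the two sums
\[
  A:=\sum_{k=0}^{N-1}\alpha_k,\qquad S:=\sum_{k=0}^{N-1}\alpha_k^2 .
\]
We take the instance $f(x)=\beta|x-x_\star|$ with $x_\star=x_0$ and the selection $g(x)=\beta\,\mathrm{sgn}(x-x_\star)$. Under the tilted law $Q$ the noise is $e_k=\mu_k+\eta_k$, with drift $\mu_k:=-t\alpha_k$ and $\eta_k\sim N(0,s^2)$ i.i.d. The parameters $t,\beta>0$ are fixed below.

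\emph{Failure under $Q$.} Since $-\beta\,\mathrm{sgn}(z_k)\ge-\beta$, each increment $-h_k(g_k+e_k)$ is at least $h_k(t\alpha_k-\beta)-h_k\eta_k$. Summing these increments with the averaging weights gives
\[
  x_A-x_0\;\ge\;\sum_k\alpha_k(t\alpha_k-\beta)-\sum_k\alpha_k\eta_k\;=\;tS-\beta A-T,
\]
where $T:=\sum_k\alpha_k\eta_k$ is a centered Gaussian. On $B:=\{T\le0\}$, which has $Q(B)\ge\tfrac12$, we get $f(x_A)-f_\star\ge\beta(tS-\beta A)$. We maximize over $\beta$ by taking $\beta=tS/(2A)$, which yields the gap $t^2S^2/(4A)$. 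Setting $t^2:=4A\eps/S^2$ makes failure certain on $B$. Then $\beta=\sqrt{\eps/A}\to0$, so $\beta\le L$ for $N$ large. The drift energy is $E=\gamma^2:=t^2S=4A\eps/S$.

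\emph{Change of measure.} The disintegration and likelihood-ratio computation of Lemma~\ref{lem:classical} apply unchanged with this deterministic drift, and give \eqref{eq:changeofmeasure}. Hence
\[
  \mathcal E_N\le\frac{\gamma^2}{2s^2}+\frac{\gamma}{s}+\log3,\qquad \frac{\gamma^2}{2s^2}=\frac{2A\eps}{Ss^2}.
\]

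\emph{Asymptotics of $A$ and $S$.} This is the only genuinely new step and where care is needed. A Riemann-sum comparison gives
\[
  A=\frac{h}{N+1}\sum_{k<N}\frac{N-k}{\sqrt{k+1}}=\big(2-\tfrac23\big)h\sqrt N\,(1+o(1))=\tfrac43h\sqrt{N+1}\,(1+o(1)).
\]
For $S=h^2\sum_{k<N}(1-k/(N+1))^2/(k+1)$ we need more care, because the sum is dominated by small $k$. Splitting at $k\le N/\log N$ shows that the weight $(1-k/(N+1))^2$ is $1-o(1)$ on a range carrying $\log N\,(1-o(1))$ of the harmonic mass. The remainder only contributes $O(\log\log N)$, so $S=h^2\log(N+1)(1+o(1))$. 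Substituting,
\[
  \frac{\gamma^2}{2s^2}=\frac{2\cdot\tfrac43h\sqrt{N+1}\,\eps}{h^2\log(N+1)\,s^2}\,(1+o(1))=\frac83\cdot\frac{\eps\sqrt{N+1}}{h\,s^2\log(N+1)}\,(1+o(1)).
\]
Since $\gamma^2\to\infty$, both $\gamma/s$ and $\log3$ are $o(\gamma^2/s^2)$, and the display follows.
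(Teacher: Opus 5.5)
Your proposal is correct and follows the paper's proof essentially step for step. Both use the instance with $x_\star=x_0$, the drift $\mu_k=-t\alpha_k$ proportional to the conic coefficients, the choice $\beta=\sqrt{\eps/A}$ with energy $\gamma^2=4\eps A/S$, failure on $\{T\le0\}$, and the change of measure of Lemma~\ref{lem:classical}. The only difference is in estimating the sums: you derive asymptotic equivalents for $A$ and $S$, whereas the paper uses the explicit bound $A\le\frac43h\sqrt{N+1}$ and the exact closed form $S=h^2(H_{N+1}-\frac32)+h^2/(2(N+1))\ge h^2(\log(N+1)-\frac32)$. The closed form makes your splitting argument for $S$ unnecessary, and only these one-sided bounds are needed.
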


\begin{proof}
The conic coefficients of uniform averaging are here
$\alpha_k=h(N-k)/\big(\sqrt{k+1}\,(N+1)\big)$ for $k=0,\dots,N-1$. As
$t\mapsto\big((N+1)-t\big)/\sqrt t$ decreases on $(0,N+1]$, their sum is dominated by the integral,
\[
  A\;:=\;\sum_{k=0}^{N-1}\alpha_k
  \;=\;\frac h{N+1}\sum_{k=0}^{N-1}\frac{(N+1)-(k+1)}{\sqrt{k+1}}
  \;\le\;\frac h{N+1}\int_0^{N}\frac{(N+1)-t}{\sqrt t}\,dt
  \;\le\;\frac43\,h\sqrt{N+1}.
\]
Conversely, rectangles under the curve over $[1,N+1]$ give
\[
  A\;\ge\;\frac h{N+1}\int_1^{N+1}\frac{(N+1)-t}{\sqrt t}\,dt
  \;=\;h\Big(\frac43\sqrt{N+1}-2+\frac2{3(N+1)}\Big)
  \;\ge\;h\Big(\frac43\sqrt{N+1}-2\Big).
\]
The square--sum evaluates in closed form,
\[
  S:=\sum_{k=0}^{N-1}\alpha_k^2
  =\frac{h^2}{(N+1)^2}\sum_{k=0}^{N-1}\frac{(N-k)^2}{k+1}
  =h^2\Big(H_{N+1}-\frac32\Big)+\frac{h^2}{2(N+1)}
  \;\ge\;h^2\Big(\log(N+1)-\frac32\Big).
\]
Set $\beta:=\sqrt{\eps/A}$ so that $\beta\le L$
for all $N$ large enough at fixed $h$ and $\eps$ by the lower bound on $A$
and consider again the instance
$f(x)=\beta\,|x-x_\star|\in\mathcal C$ with $x_\star=x_0$, with
selection $g(x)=\beta\,\mathrm{sgn}(x-x_\star)$ as before. Let
$Q$ be the law of the run under $e_k=\mu_k+\eta_k$, $k<N$, with the
deterministic drift $\mu_k:=-\lambda\alpha_k$, $\lambda>0$ selected below,
of total energy
\[
  E\;:=\;\sum_{k=0}^{N-1}\mu_k^2
  \;=\;\lambda^2\sum_{k=0}^{N-1}\alpha_k^2
  \;=\;\lambda^2S
\]
and $\eta_k\sim N(0,s^2)$ i.i.d. Write $z_k:=x_k-x_0$. The
selection obeys $g_k=\beta\,\mathrm{sgn}(z_k)\le\beta$,
so the iteration \eqref{eq:method} gives the recursive
inequality
\[
  z_{k+1}\;\ge\;z_k-h_k\beta+h_k\lambda\alpha_k-h_k\eta_k ,
\]
whence by induction $z_k\ge z_{0,k}-W_k$ with $W_k:=\sum_{j<k}h_j\eta_j$
and the noiseless path $z_{0,k}:=\sum_{j<k}h_j(\lambda\alpha_j-\beta)$.
Averaging the noiseless path exactly, exchanging the order of summation and
using $h_j(N-j)/(N+1)=\alpha_j$,
\[
  \frac1{N+1}\sum_{k=0}^{N}z_{0,k}
  \;=\;\sum_{j=0}^{N-1}\alpha_j\big(\lambda\alpha_j-\beta\big)
  \;=\;\lambda\sum_{j=0}^{N-1}\alpha_j^2-\beta\sum_{j=0}^{N-1}\alpha_j
  \;=\;\lambda S-\beta A .
\]
Select $\lambda$ so that this average exactly meets the failure threshold,
\[
  \lambda S\;=\;\frac\eps\beta+\beta A\;=\;2\sqrt{\eps A}\,,
\]
a positive drift. With $\lambda$ so selected, the energy of the drift
equates, by the bounds on $A$ and $S$, to
\[
  \gamma^2\;:=\;E\;=\;\frac{4\eps A}S
  \;\le\;\frac{16}3\cdot\frac{\eps\sqrt{N+1}}{h\,\big(\log(N+1)-\tfrac32\big)}\,.
\]

\emph{Failure under $Q$.} Consider the event $B:=\{T\le0\}$ with
$T:=\frac1{N+1}\sum_{k=0}^NW_k=\sum_{j=0}^{N-1}\alpha_j\eta_j$; as $T$ is a
centered Gaussian under $Q$, $Q(B)\ge\tfrac12$. On $B$ the recursive
inequality and the selection of $\lambda$ give
\[
  f(x_A)-f_\star\;=\;\beta\,\Big|\frac1{N+1}\sum_{k=0}^{N}z_k\Big|
  \;\ge\;\beta\big(\lambda S-\beta A\big)-\beta T
  \;\ge\;\eps :
\]
failure is certain.

\emph{Change of measure.} Using the same change of measure argument as in
Lemma~\ref{lem:classical}, here with energy $E=\gamma^2$, we arrive again at
\eqref{eq:changeofmeasure}.
Therefore $\mathcal E_N\le\gamma^2/(2s^2)+\gamma/s+\log3$. At fixed
$h$, $\eps$ and $s$ one has $\gamma^2/s^2\to\infty$ while
$\gamma/s=o(\gamma^2/s^2)$ and $\log(N+1)-\tfrac32=\log(N+1)\,(1-o(1))$, and
the display follows.
\end{proof}

\section{An optimized schedule}\label{sec:schedule}

This section proves the achievability half of the design problem. The stochastic harmonic schedule \eqref{eq:harmonic} of the
introduction attains the exponent $\mathcal E^\star_N = \eps^2N/(2R^2s^2)(1+o(1))$ asymptotically.
Throughout, $\Delta_k:=f(x_k)-f_\star\ge0$ and
$V_k:=\|x_k-x_\star\|^2$.

\begin{theorem}\label{thm:main}
Let $N\ge2$ and run the stochastic harmonic schedule \eqref{eq:harmonic} with uniform
output weights. Then for every noise law obeying \eqref{eq:subg},
\[
  \mathcal E_N\;\ge\;
  \frac{\eps^2}{2R^2s^2}\Big(N+2-m-\frac{2(N+1)}{m}\Big)
  -\Big(\frac{L^2}{s^2}+\frac d2\Big)\log\frac{N+m}{m-1}
  \;=\;\frac{\eps^2N}{2R^2s^2}\,\big(1-o(1)\big).
\]
\end{theorem}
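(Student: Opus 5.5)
The plan is an exponential supermartingale built on the distance process $V_k$, whose tilts are tied to the harmonic steps so that every iteration contributes the same multiple of $\Delta_k$. The estimate is then closed with a Chernoff bound. Uniform averaging and convexity give $f(x_A)-f_\star\le\frac1{N+1}\sum_{k=0}^N\Delta_k$, so it suffices to bound $\Prob\big(\sum_{k=0}^{N}\Delta_k\ge(N+1)\eps\big)$ uniformly over $f\in\mathcal C$ and over admissible $P$.

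\emph{One--step bound.} Expanding \eqref{eq:method} gives
\[
V_{k+1}=V_k-2h_k\langle g_k+e_k,x_k-x_\star\rangle+h_k^2\|g_k+e_k\|^2 .
\]
By convexity, $\langle g_k,x_k-x_\star\rangle\ge\Delta_k$. For a deterministic tilt $\theta_{k+1}>0$, I would bound $\E[\exp(\theta_{k+1}V_{k+1})\mid\mathcal F_k]$ using \eqref{eq:subg} in two places.
\begin{itemize}
\item The linear noise term $-2\theta_{k+1}h_k\langle e_k,x_k-x_\star\rangle$ contributes at most $\exp(2\theta_{k+1}^2h_k^2s^2V_k)$.
\item The quadratic term is handled by $\|g+e\|^2\le(1+\delta)L^2+(1+\delta^{-1})\|e\|^2$, or more cleanly by decoupling the two parts with Cauchy--Schwarz or H\"older.
\end{itemize}
For the quadratic part, the sub--Gaussian chi--square moment bound $\E[\exp(t\|e_k\|^2)\mid\mathcal F_k]\le(1-2ts^2)^{-d/2}$ holds for $2ts^2<1$. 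Altogether this should give
\[
\E[e^{\theta_{k+1}V_{k+1}}\mid\mathcal F_k]\le\exp\Big((\theta_{k+1}+2\theta_{k+1}^2h_k^2s^2)V_k-2\theta_{k+1}h_k\Delta_k+c_k\Big),
\]
with $c_k\approx\theta_{k+1}h_k^2L^2+\tfrac d2\cdot O(\theta_{k+1}h_k^2s^2)$.

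\emph{Tilt recursion.} I would require the Riccati--type inequality $\theta_k\ge\theta_{k+1}+2\theta_{k+1}^2h_k^2s^2$ together with a constant weight $2\theta_{k+1}h_k=\rho$ on each $\Delta_k$. With $h_k=R^2/(\eps(N+m-k))$ this forces $\theta_{k+1}=\rho\eps(N+m-k)/(2R^2)$. The increments are then $\theta_k-\theta_{k+1}=\rho\eps/(2R^2)$, and the correction is $2\theta_{k+1}^2h_k^2s^2=\rho^2s^2/2$. These are compatible exactly when $\rho=\eps/(R^2s^2)$, which is where the harmonic shape comes from. Consequently
\[
M_k=\exp\Big(\theta_kV_k+\rho\sum_{j<k}\Delta_j-\sum_{j<k}c_j\Big)
\]
is a supermartingale. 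Markov's inequality applied to $M_N$, with $V_0\le R^2$, gives a failure probability at most $\exp\big(\theta_0R^2-\rho(N+1)\eps+\rho\Delta_N\text{-correction}+\sum c_j\big)$. The main terms are $\theta_0R^2=\rho\eps(N+m)/2$ and $\rho(N+1)\eps$, which combine to $-\frac{\eps^2}{2R^2s^2}(N+2-m)$.

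\emph{Remaining terms and the main obstacle.} The remainders are where I expect the real work. Each $c_j$ is proportional to $\rho h_j$, so $\sum_j c_j$ is $(L^2/s^2+d/2)$ times a harmonic sum $\sum_k 1/(N+m-k)\le\log\frac{N+m}{m-1}$. This requires the chi--square condition $2ts^2<1$, which I expect is exactly what forces the offset $m\ge\sqrt{2N}$: the tilt on $\|e_k\|^2$ is largest at the last step, where it is of order $\rho^2s^2R^2/\eps\cdot h_{N-1}$.

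The final iterate $x_N$ appears in the average but is not matched by a $\Delta_N$ weight. I would absorb it through $\theta_NV_N$, using $\Delta_N\le$ something controlled by $V_N$ with slack $\theta_N=\rho\eps m/(2R^2)$. Alternatively, I would shift weight, which should produce the $-2(N+1)/m$ loss.

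Getting these two boundary effects, the chi--square admissibility and the missing $\Delta_N$, to fit with the stated constants is the step I expect to be delicate. Everything else is bookkeeping. Finally, $m\sim\sqrt{2N}$ makes both the $m$ and the $(N+1)/m$ losses $O(\sqrt N)$, and the log term is $O(\log N)$, which gives the $(1-o(1))$ form.
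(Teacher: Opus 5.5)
The architecture of your proposal is the paper's: a Chernoff reduction to $\sum_k\Delta_k$, an exponential supermartingale $\exp(\theta_kV_k+\rho\sum_{j<k}\Delta_j)$, and harmonic steps obtained by balancing a Riccati--type tilt recursion against a constant weight on each $\Delta_k$. There is a genuine gap, however: the one--step bound on which everything rests is false as written.

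\textbf{The one--step bound fails.} The linear term $-2\theta_{k+1}h_k\langle e_k,\cdot\rangle$ and the quadratic term $\theta_{k+1}h_k^2\|e_k\|^2$ depend on the same $e_k$, so their separate moment bounds cannot simply be multiplied. A concrete counterexample uses $e_k\sim N(0,s^2I_d)$. Then exactly $V_{k+1}=\|y_k-h_ke_k\|^2$ with $y_k:=x_k-x_\star-h_kg_k$, and the noncentral $\chi^2$ moment generating function gives
$\E[e^{\theta_{k+1}V_{k+1}}\mid\mathcal F_k]=(1-2\theta_{k+1}h_k^2s^2)^{-d/2}\exp\big(\theta_{k+1}\|y_k\|^2/(1-2\theta_{k+1}h_k^2s^2)\big)$.
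On $f=\beta|x-x_\star|$ one has $\|y_k\|^2=V_k-2h_k\Delta_k+h_k^2\beta^2$. So the true coefficient of $V_k$ is $\theta_{k+1}/(1-2\theta_{k+1}h_k^2s^2)$, of which your $\theta_{k+1}+2\theta_{k+1}^2h_k^2s^2$ is only the first--order truncation. The excess $4\theta_{k+1}^3h_k^4s^4/(1-2\theta_{k+1}h_k^2s^2)$ multiplies $V_k$. Since $V_k$ is unbounded, neither $c_k$ nor the order--$\sqrt{V_k}$ term $\Delta_k=\beta\sqrt{V_k}$ can absorb it.

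The correct requirement is the reciprocal recursion $1/\theta_{k+1}\ge1/\theta_k+2s^2h_k^2$. Your tilts $\theta_{k+1}=(N+m-k)/\omega$, with $\omega:=2s^2R^4/\eps^2$, meet your recursion with equality but violate this one: with $n=N+m-k$ it would require $\frac1n-\frac1{n+1}\ge\frac1{n^2}$. So your $M_k$ is not a supermartingale. The decouplings you mention do not rescue the stated constants either. Cauchy--Schwarz doubles the coefficient of the linear term and roughly halves the certified exponent. H\"older with $p\downarrow1$, or the split $(1+\delta)L^2+(1+\delta^{-1})\|e\|^2$, inflates the tilt on $\|e_k\|^2$ and at best recovers the leading order after further losses, not the explicit bound of the theorem.

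\textbf{The missing ingredient and its consequences.} What you need is a joint linear--quadratic moment bound: for conditionally sub--Gaussian $e$,
$\E[\exp(\langle u,e\rangle+b\|e\|^2)\mid\mathcal G]\le(1-2bs^2)^{-d/2}\exp\big(s^2\|u\|^2/(2(1-2bs^2))\big)$.
It follows by writing $\exp(b\|e\|^2)=\E_\zeta\exp\langle\sqrt{2b}\,\zeta,e\rangle$ with an independent $\zeta\sim N(0,I_d)$, applying \eqref{eq:subg} inside, and evaluating a Gaussian integral. Apply it with $u=-2\phi_{k+1}h_ky_k$ (which also absorbs the cross term $2h_k^2\langle g_k,e_k\rangle$) and $b=\phi_{k+1}h_k^2$, then use $\|y_k\|^2\le V_k+h_k^2L^2$. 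The recursion $1/\phi_{k+1}=1/\phi_k+2s^2h_k^2$ makes $1-2bs^2=\phi_{k+1}/\phi_k$. Hence admissibility holds automatically for every $m$, and the dimension terms telescope.

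The price is that the tilts are no longer exactly linear. With $\phi_0=(N+m)/\omega$ one only gets $(N+m-k)/\omega\le\phi_k\le(N+m-k+1)/\omega$, so $2\phi_{k+1}h_k$ varies with $k$. Uniform weights then need $t\le\min_k2\phi_{k+1}h_k$, and the sandwich only guarantees $2\phi_{k+1}h_k\ge(1-1/m)\,2R^2/(\eps\omega)$. That factor $1-1/m$ is the actual source of the $-2(N+1)/m$ term, not the missing $\Delta_N$. Likewise, $m=\lceil\sqrt{2N}\,\rceil$ is not forced by $\chi^2$ admissibility; it merely balances this loss against the $-m$ coming from $\phi_0R^2$.

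Your treatment of $\Delta_N$ does work. The bound $t\Delta_N\le tL\sqrt{V_N}\le\phi_NV_N+t^2L^2/(4\phi_N)$ costs only $O(L^2/(s^2m))$, and is a valid alternative to the paper's noiseless auxiliary step $h_N=R^2/(\eps m)$.
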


\subsection{An exponential supermartingale certificate}

The proof rests on an exponential supermartingale certificate. A
Chernoff bound first reduces the failure probability to an exponential moment of
the accumulated suboptimality gaps. That moment is then certified by combining
\emph{valid inequalities} of two kinds. The pathwise subgradient
inequalities between the iterates and the minimizer are shared with the adversarial analysis of \cite{gosgens}. Distributional
conditional inequalities on exponential
linear--quadratic moments of the noise complete the stochastic analysis.
The following
lemma is a straightforward conditional variant of the quadratic--form bound of \cite[Rem.~2.3]{hkz}, but we include a short proof to keep the paper self--contained. The martingale
certificate, following ideas in \cite{chunglu,harvey,liu}, combines these
inequalities with free multipliers which we then select judiciously at the end to get the desired exponent guarantee.

\begin{lemma}\label{lem:subg}
Let $e$ satisfy $\E[\exp\langle a,e\rangle\mid\mathcal G]\le\exp(s^2\|a\|^2/2)$ for all
$a\in\R^d$. Then for every $\mathcal G$--measurable vector $u$ and deterministic $b\ge0$ with
$2bs^2<1$,
\begin{equation}\label{eq:mgf}
  \E\big[\exp\big(\langle u,e\rangle+b\|e\|^2\big)\,\big|\,\mathcal G\big]
  \;\le\;\big(1-2bs^2\big)^{-d/2}\exp\!\Big(\frac{s^2\|u\|^2}{2(1-2bs^2)}\Big).
\end{equation}
\end{lemma}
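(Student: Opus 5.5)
The plan is the Gaussian decoupling (Hubbard–Stratonovich) trick. It converts the quadratic term $b\|e\|^2$ into an average of linear exponents, so that the sub-Gaussian hypothesis applies directly.

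\emph{Step 1 (linearize the quadratic term).} For $b>0$, with $\zeta\sim N(0,I_d)$ independent of everything else, the Gaussian moment generating function gives the identity
\[
  \exp\big(b\|e\|^2\big)\;=\;\E_\zeta\big[\exp\big(\sqrt{2b}\,\langle\zeta,e\rangle\big)\big].
\]
The case $b=0$ is just the hypothesis applied with $a=u$.

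\emph{Step 2 (apply the hypothesis under the expectation).} By Tonelli, everything being nonnegative, we may exchange the conditional expectation over $e$ with the expectation over $\zeta$. Conditionally on $\mathcal G$ and $\zeta$, the vector $a=u+\sqrt{2b}\,\zeta$ is fixed. One technical point is that the hypothesis is stated for deterministic $a$ but must hold for $\mathcal G$-measurable $a$, here even with an independent $\zeta$ adjoined. The standard remedy is to enlarge $\mathcal G$ by the independent $\zeta$, which preserves the conditional bound, and to pass from deterministic to $\mathcal G$-measurable $a$ by approximating with simple random vectors and applying monotone convergence. This is exactly how \eqref{eq:subg} is used with $\mathcal F_k$-measurable tilts, so I will note it briefly. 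We then obtain
\[
  \E\big[e^{\langle u,e\rangle+b\|e\|^2}\,\big|\,\mathcal G\big]\;\le\;\E_\zeta\Big[\exp\Big(\tfrac{s^2}{2}\big\|u+\sqrt{2b}\,\zeta\big\|^2\Big)\Big].
\]

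\emph{Step 3 (Gaussian integral).} Expand the exponent as
\[
  \tfrac{s^2}{2}\|u\|^2\;+\;s^2\sqrt{2b}\,\langle u,\zeta\rangle\;+\;bs^2\|\zeta\|^2 .
\]
Since $2bs^2<1$, the $\zeta$-integral converges. Completing the square coordinatewise in $\int(2\pi)^{-d/2}\exp\big(-\tfrac{1-2bs^2}{2}\|\zeta\|^2+s^2\sqrt{2b}\langle u,\zeta\rangle\big)\,d\zeta$ yields the factor
\[
  (1-2bs^2)^{-d/2}\exp\Big(\frac{2bs^4\|u\|^2}{2(1-2bs^2)}\Big).
\]
Combining this with the $\tfrac{s^2}{2}\|u\|^2$ term gives the total exponent
\[
  \frac{s^2\|u\|^2}{2}\Big(1+\frac{2bs^2}{1-2bs^2}\Big)\;=\;\frac{s^2\|u\|^2}{2(1-2bs^2)},
\]
which is \eqref{eq:mgf}.

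The main obstacle is only the measurability and exchange bookkeeping in Step 2; the remaining steps are explicit computations.
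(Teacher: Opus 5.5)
Your proposal is correct and follows the paper's proof essentially step for step: the same Gaussian linearization $\exp(b\|e\|^2)=\E_\zeta[\exp(\sqrt{2b}\langle\zeta,e\rangle)]$, the same Tonelli exchange, and the same application of the sub-Gaussian bound at $a=u+\sqrt{2b}\,\zeta$. The only difference is that you evaluate the final Gaussian integral by completing the square (your computation checks out), where the paper quotes the noncentral $\chi^2_d$ moment generating function, and you spell out the measurable-tilt step that the paper passes over in one line; in that step the limit passage should use the conditional Fatou lemma rather than monotone convergence, since $\exp\langle a_n,e\rangle$ need not be monotone in $n$.
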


\begin{proof}
Let $g\sim N(0,I_d)$ be independent of everything and recall the Gaussian moment
identity $\E_g[\exp\langle y,g\rangle]=\exp(\|y\|^2/2)$. Applied at
$y=\sqrt{2b}\,e$ with $e$ held fixed, it gives
\[
  \exp\big(b\|e\|^2\big)=\E_g\big[\exp\langle\sqrt{2b}\,g,\,e\rangle\big],
\]
and hence
\[
  \exp\big(\langle u,e\rangle+b\|e\|^2\big)
  =\E_g\big[\exp\langle u+\sqrt{2b}\,g,\,e\rangle\big].
\]
Taking conditional expectations given $\mathcal G$ and swapping the two integrals (justified by Tonelli, the integrand being nonnegative and $g$ independent of $e$ and
$\mathcal G$, so that $\E_g$ is integration against the fixed law $N(0,I_d)$) gives
\[
  \E\big[\exp\big(\langle u,e\rangle+b\|e\|^2\big)\,\big|\,\mathcal G\big]
  =\E\Big[\E_g\big[\exp\langle u+\sqrt{2b}\,g,\,e\rangle\big]\,\Big|\,\mathcal G\Big]
  =\E_g\Big[\E\big[\exp\langle u+\sqrt{2b}\,g,\,e\rangle\,\big|\,\mathcal G\big]\Big].
\]
For each fixed value of $g$ the vector $u+\sqrt{2b}\,g$ is $\mathcal G$--measurable. Hence, we have from sub-Gaussian property
\[
  \E_g\Big[\E\big[\exp\langle u+\sqrt{2b}\,g,\,e\rangle\,\big|\,\mathcal G\big]\Big]
  \;\le\;\E_g\Big[\exp\Big(\tfrac{s^2}2\big\|u+\sqrt{2b}\,g\big\|^2\Big)\Big].
\]
The right--hand side is a standard Gaussian integral, the moment generating
function of a noncentral $\chi^2_d$ \cite[Ch.~29]{jkb}. For
$v\sim N(\mu,\sigma^2I_d)$ and $2t\sigma^2<1$,
\[
  \E\big[\exp\big(t\|v\|^2\big)\big]
  =\big(1-2t\sigma^2\big)^{-d/2}
   \exp\Big(\frac{t\|\mu\|^2}{1-2t\sigma^2}\Big).
\]
Here $v=u+\sqrt{2b}\,g$ has mean $\mu=u$ and covariance $\sigma^2I_d$ with
$\sigma^2=2b$, and $t=s^2/2$, so that $1-2t\sigma^2=1-2bs^2$ and the
right--hand side equals
\[
  \big(1-2bs^2\big)^{-d/2}\exp\Big(\frac{s^2\|u\|^2}{2(1-2bs^2)}\Big).\qedhere
\]
\end{proof}

Based on the previous moment bound a performance certificate is now assembled
from a sequence $\phi$ generated by a tilt recursion.

\begin{proposition}[Martingale certificate]\label{prop:cert}
Let $h_0,\dots,h_{N-1}\ge0$ be any schedule, extended by a free auxiliary stepsize
$h_N\ge0$, and let $\phi_0>0$. Define the tilt sequence
\begin{equation}\label{eq:riccati}
  \frac1{\phi_{k+1}}\;=\;\frac1{\phi_k}+2s^2h_k^2 ,\qquad k=0,\dots,N .
\end{equation}
Let $w_0,\dots,w_N\ge0$ be output weights and $t\ge0$ satisfy $t\,w_k\le2\phi_{k+1}h_k$
for all $k$. Then for every $f\in\mathcal C$ and every noise law obeying \eqref{eq:subg},
\begin{equation}\label{eq:master}
  -\log\Prob\big(f(x_A)-f_\star\ge\eps\big)\;\ge\;
  \eps\,t\,W-\phi_0R^2-L^2\sum_{k=0}^{N}\phi_kh_k^2-\frac d2\log\frac{\phi_0}{\phi_{N+1}}.
\end{equation}
\end{proposition}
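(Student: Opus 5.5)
The plan is a Chernoff bound followed by a one-step exponential supermartingale inequality, telescoped along the filtration $\mathcal F_k$.

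\emph{Chernoff reduction.} By convexity of $f$ and the nonnegativity of the weights, $f(x_A)-f_\star\le\sum_{k=0}^N w_k\Delta_k/W$. Hence for $t\ge0$,
\[
\Prob\big(f(x_A)-f_\star\ge\eps\big)\le e^{-\eps tW}\,\E\exp\Big(t\sum_{k=0}^N w_k\Delta_k\Big).
\]
It therefore suffices to show
\[
\E\exp\Big(t\sum_k w_k\Delta_k\Big)\le\exp\Big(\phi_0R^2+L^2\sum_{k=0}^N\phi_kh_k^2+\tfrac d2\log(\phi_0/\phi_{N+1})\Big).
\]
From the tilt recursion~\eqref{eq:riccati}, $\phi_k$ is nonincreasing, so the hypothesis $tw_k\le 2\phi_{k+1}h_k$ also gives $tw_k\le2\phi_kh_k$. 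I will therefore bound the larger quantity $\E\exp\big(\sum_k 2\phi_kh_k\Delta_k\big)$.

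\emph{One-step inequality, $k<N$.} Set $y_k:=x_k-x_\star-h_kg_k$, which is $\mathcal F_k$-measurable because $x_k$ and the selection $g_k$ are. Then
\[
V_{k+1}=\|y_k\|^2-2h_k\langle y_k,e_k\rangle+h_k^2\|e_k\|^2 .
\]
I apply Lemma~\ref{lem:subg} with $\mathcal G=\mathcal F_k$, $u=-2\phi_{k+1}h_ky_k$ and $b=\phi_{k+1}h_k^2$. The recursion~\eqref{eq:riccati} gives exactly $1-2bs^2=\phi_{k+1}/\phi_k$, which is positive, and $2s^2h_k^2\phi_k\phi_{k+1}=\phi_k-\phi_{k+1}$. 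With these identities the Gaussian correction term becomes $(\phi_k-\phi_{k+1})\|y_k\|^2$, and it combines with $\phi_{k+1}\|y_k\|^2$ into
\[
\E\big[e^{\phi_{k+1}V_{k+1}}\mid\mathcal F_k\big]\le(\phi_k/\phi_{k+1})^{d/2}\,e^{\phi_k\|y_k\|^2}.
\]
Expanding $\|y_k\|^2=V_k-2h_k\langle g_k,x_k-x_\star\rangle+h_k^2\|g_k\|^2$ and using the subgradient inequality $\langle g_k,x_k-x_\star\rangle\ge\Delta_k$ together with $\|g_k\|\le L$ yields
\[
\E\big[e^{\phi_{k+1}V_{k+1}+2\phi_kh_k\Delta_k}\mid\mathcal F_k\big]\le(\phi_k/\phi_{k+1})^{d/2}e^{\phi_kV_k+L^2\phi_kh_k^2}.
\]

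\emph{Final index and telescoping.} At $k=N$ there is no noise, so I use the auxiliary step deterministically. Since $0\le\|y_N\|^2\le V_N-2h_N\Delta_N+h_N^2L^2$, it follows that $2\phi_Nh_N\Delta_N\le\phi_NV_N+L^2\phi_Nh_N^2$. The factor $(\phi_N/\phi_{N+1})^{d/2}\ge1$ is harmless slack. Alternatively, one may adjoin an independent $N(0,s^2I_d)$ variable $e_N$ and repeat the step above.

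I then define $M_k:=\exp\big(\phi_kV_k+\sum_{j<k}2\phi_jh_j\Delta_j\big)$ divided by the accumulated constants. The one-step inequality says $M_k$ is a supermartingale. Iterating the tower property from $k=N$ down to $0$, and using $V_0\le R^2$ with the telescoping product $\prod_k(\phi_k/\phi_{k+1})^{d/2}=(\phi_0/\phi_{N+1})^{d/2}$, gives the claimed moment bound and hence~\eqref{eq:master}.

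The main obstacle is the completing-the-square step. One must recognize that choosing the tilt recursion~\eqref{eq:riccati} makes the Lemma~\ref{lem:subg} correction coefficient $\phi_{k+1}\cdot\tfrac{2s^2h_k^2\phi_{k+1}}{1-2s^2h_k^2\phi_{k+1}}$ equal exactly $\phi_k-\phi_{k+1}$. This is what lets the tilt roll back from $\phi_{k+1}$ to $\phi_k$ on $V_k$, and it must be verified carefully. The measurability of $u$ and of $g_k$ with respect to $\mathcal F_k$ also needs to be checked, since the lemma only applies to $\mathcal F_k$-measurable $u$.
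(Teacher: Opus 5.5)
Your proposal is correct and is essentially the paper's proof: the same Chernoff reduction, the same exponential supermartingale built from the partial gap sums and the tilted potential $\phi_kV_k$, Lemma~\ref{lem:subg} applied with $u=-2\phi_{k+1}h_ky_k$ and $b=\phi_{k+1}h_k^2$, and the identity $\phi_{k+1}(1+2s^2h_k^2\phi_k)=\phi_k$ that rolls the tilt back from $\phi_{k+1}$ to $\phi_k$. The differences are minor. You apply the lemma to the exact expansion $V_{k+1}=\|y_k-h_ke_k\|^2$ and only then use the subgradient inequality on $\phi_k\|y_k\|^2$, so you keep $\Delta_k$ with coefficient $2\phi_kh_k$ instead of the paper's $2\phi_{k+1}h_k$; your argument therefore only needs $tw_k\le2\phi_kh_k$. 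You also handle the auxiliary index $k=N$ by the deterministic bound $2h_N\Delta_N\le V_N+h_N^2L^2$ rather than by setting $e_N:=0$.
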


\begin{proof}
We extend \eqref{eq:method} to $k=N$ with $e_N:=0$ and the auxiliary point
$x_{N+1}:=x_N-h_Ng_N$, $g_N\in\partial f(x_N)$. All sums run over
$k=0,\dots,N$.

\emph{Step 1: Chernoff reduction.}
By Jensen's inequality applied to the output average,
$f(x_A)-f_\star\le\sum_{k=0}^{N}w_k\Delta_k/W$, so an exponential Markov inequality at threshold
$\eps$ gives
\[
  \Prob\big(f(x_A)-f_\star\ge\eps\big)
  \;\le\;\Prob\Big(t\sum_{k=0}^{N}w_k\Delta_k\ge\eps tW\Big)
  \;\le\;e^{-\eps tW}\,\E\Big[\exp\Big(t\sum_{k=0}^{N}w_k\Delta_k\Big)\Big].
\]
It therefore suffices to certify the exponential moment bound
\begin{equation}\label{eq:moment}
  \E\Big[\exp\Big(t\sum_{k=0}^{N}w_k\Delta_k\Big)\Big]
  \;\le\;\exp\Big(\phi_0R^2+L^2\sum_{k=0}^{N}\phi_kh_k^2+\frac d2\log\frac{\phi_0}{\phi_{N+1}}\Big).
\end{equation}

\emph{Step 2: pathwise inequalities.}
The standard potential--function argument \cite{nemyud,njls} expands the
squared distance to the minimizer, with the noise terms kept exact rather than taken
in expectation. Expanding \eqref{eq:method},
\[
  V_{k+1}\;\le\;V_k-2h_k\langle g_k,x_k-x_\star\rangle+h_k^2\|g_k\|^2
  -2h_k\langle e_k,\,y_k\rangle+h_k^2\|e_k\|^2 ,
  \qquad y_k:=x_k-x_\star-h_kg_k ,
\]
and applying the subgradient inequality between the iterates $x_k$ and $x_\star$,
$\langle g_k,x_k-x_\star\rangle\ge\Delta_k\ge0$, together with $\|g_k\|\le L$,
\begin{equation}\label{eq:onestep}
  V_{k+1}\;\le\;V_k-2h_k\Delta_k+h_k^2L^2-2h_k\langle e_k,y_k\rangle+h_k^2\|e_k\|^2 ,
  \qquad
  \|y_k\|^2\le V_k+h_k^2L^2 ,
\end{equation}
the last bound again by $\langle g_k,x_k-x_\star\rangle\ge0$.

\emph{Step 3: combining the inequalities.}
It remains to combine the pathwise inequalities \eqref{eq:onestep} with the
distributional moment bound of Lemma~\ref{lem:subg} into \eqref{eq:moment}. The combination is
an adapted exponential process carrying the partial sums of the weighted gaps
together with the tilted potential:
\[
  M_k\;:=\;\exp\Big(\textstyle t\sum_{j<k}w_j\Delta_j+\phi_kV_k\Big),
  \qquad k=0,\dots,N+1 .
\]
We show that $M$ is a supermartingale up to a deterministic per--step factor. With
$\beta_k:=\phi_kh_k^2L^2+\tfrac d2\log(\phi_k/\phi_{k+1})$,
\begin{equation}\label{eq:onestepexp}
  \E[M_{k+1}\mid\mathcal F_k]\;\le\;e^{\beta_k}M_k ,\qquad k=0,\dots,N .
\end{equation}
Since the partial sums $t\sum_{j<k}w_j\Delta_j$ are $\mathcal F_k$--measurable
they factor out of $\E[M_{k+1}\mid\mathcal F_k]$, and \eqref{eq:onestepexp} reduces
to
$\E[\exp(tw_k\Delta_k+\phi_{k+1}V_{k+1})\mid\mathcal F_k]\le\exp(\phi_kV_k+\beta_k)$,
which we establish in parts. First, the pathwise inequality \eqref{eq:onestep} gives
\begin{multline*}
  \E\big[\exp\big(tw_k\Delta_k+\phi_{k+1}V_{k+1}\big)\,\big|\,\mathcal F_k\big]\\
  \;\le\;\E\Big[\exp\Big(tw_k\Delta_k+\phi_{k+1}\big(V_k-2h_k\Delta_k+h_k^2L^2
  -2h_k\langle e_k,y_k\rangle+h_k^2\|e_k\|^2\big)\Big)\,\Big|\,\mathcal F_k\Big]\\
  \;=\;\E\Big[\exp\Big(\big(tw_k-2\phi_{k+1}h_k\big)\Delta_k
  +\phi_{k+1}\big(V_k+h_k^2L^2\big)
  +\langle a_k,e_k\rangle+b_k\|e_k\|^2\Big)\,\Big|\,\mathcal F_k\Big],
\end{multline*}
collecting the noise terms with $a_k:=-2\phi_{k+1}h_ky_k$ and
$b_k:=\phi_{k+1}h_k^2$. Since
$(tw_k-2\phi_{k+1}h_k)\Delta_k\le0$ the $\Delta_k$--term may be dropped, and the
$\mathcal F_k$--measurable factors pull out of the conditional expectation. The
right--hand side is at most
\[
  \exp\big(\phi_{k+1}(V_k+h_k^2L^2)\big)\,
  \E\big[\exp\big(\langle a_k,e_k\rangle+b_k\|e_k\|^2\big)\,\big|\,\mathcal F_k\big].
\]
Lemma~\ref{lem:subg} with
$\mathcal G=\mathcal F_k$ and $(u,b)=(a_k,b_k)$, admissible because
\eqref{eq:riccati} gives $1-2b_ks^2=\phi_{k+1}/\phi_k\in(0,1]$, bounds the
remaining conditional expectation, so that
\[
  \E[M_{k+1}\mid\mathcal F_k]\leq
  \exp\big(\phi_{k+1}(V_k+h_k^2L^2)\big)\,
  \big(1-2b_ks^2\big)^{-d/2}
  \exp\Big(\frac{s^2\|a_k\|^2}{2(1-2b_ks^2)}\Big).
\]
Substituting $1-2b_ks^2=\phi_{k+1}/\phi_k$ and
$s^2\|a_k\|^2/(2(1-2b_ks^2))=2\phi_k\phi_{k+1}h_k^2s^2\|y_k\|^2$, this reads
\begin{align*}
  \E[M_{k+1}\mid\mathcal F_k]\leq & 
  \Big(\frac{\phi_k}{\phi_{k+1}}\Big)^{d/2}
  \exp\Big(\phi_{k+1}\big(V_k+h_k^2L^2\big)
                                    +2\phi_k\phi_{k+1}h_k^2s^2\,\|y_k\|^2\Big)\\
  \leq &  \Big(\frac{\phi_k}{\phi_{k+1}}\Big)^{d/2}
  \exp\Big(\big(V_k+h_k^2L^2\big) \big(\phi_{k+1}
                                    +2\phi_k\phi_{k+1}h_k^2s^2\big)\Big)
\end{align*}
using again $\|y_k\|^2\le V_k+h_k^2L^2$. The tilt sequence \eqref{eq:riccati} was chosen so that $\phi_{k+1}+2\phi_k\phi_{k+1}h_k^2s^2=\phi_{k+1}(1+2\phi_kh_k^2s^2)=\phi_k$, and the bound collapses to
\[
  \Big(\frac{\phi_k}{\phi_{k+1}}\Big)^{d/2}
  \exp\big(\phi_k\big(V_k+h_k^2L^2\big)\big)
  \;=\;\exp\big(\phi_kV_k+\beta_k\big).
\]
Iterating \eqref{eq:onestepexp} from $k=N$ down to
$k=0$ via the tower property gives
$\E[M_{N+1}]\le\exp\big(\sum_{k=0}^{N}\beta_k\big)\,\E[M_0]$, with
$M_0=\exp(\phi_0V_0)\le\exp(\phi_0R^2)$ as $V_0\le R^2$, whence
\[
  \E[M_{N+1}]\;\le\;\exp\Big(\phi_0R^2+L^2\sum_{k=0}^{N}\phi_kh_k^2+\frac d2\log\frac{\phi_0}{\phi_{N+1}}\Big)
\]
by the exact telescoping of the $d$--terms in $\sum_k\beta_k$.
Since $\phi_{N+1}V_{N+1}\ge0$ one has
$M_{N+1}\ge\exp\big(t\sum_{k=0}^{N}w_k\Delta_k\big)$, and \eqref{eq:moment} follows.
\end{proof}

Optimizing \eqref{eq:master} over its free multipliers yields a certificate design
problem:
\begin{equation}\label{eq:certopt}
  \mathcal E_N\;\ge\;\sup_{(\phi_0,w,t)}\;
  \Big\{\eps\,t\,W-\phi_0R^2-L^2\sum_{k=0}^{N}\phi_kh_k^2-\frac d2\log\frac{\phi_0}{\phi_{N+1}}\Big\},
\end{equation}
the supremum running over the triples $(\phi_0,w,t)$ admissible in
Proposition~\ref{prop:cert}.
As a first application we evaluate the certificate on the constant schedules of
Section~\ref{sec:classical}.

\begin{corollary}[Constant schedules]\label{cor:liu}
Fix $h>0$ and run the constant schedule $h_k\equiv hR^2/(\eps N)$ with uniform
averaging. Then
\[
  \mathcal E_N\;\ge\;\frac{(\sqrt{2h}-1)_+^2}{h^2}\cdot
  \frac{\eps^2N}{2R^2s^2}\,\big(1+o(1)\big).
\]
\end{corollary}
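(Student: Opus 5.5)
We plan to evaluate the certificate \eqref{eq:master} of Proposition~\ref{prop:cert} on the constant schedule $h_k\equiv h_0:=hR^2/(\eps N)$ for $k<N$. We extend it by the auxiliary step $h_N:=h_0$ as well, and take uniform weights $w_k\equiv1$, so that $W=N+1$. The auxiliary step must be positive: with $h_N=0$ the constraint $t\,w_N\le2\phi_{N+1}h_N$ would force $t=0$. The tilt recursion \eqref{eq:riccati} is then explicit, $1/\phi_k=1/\phi_0+2s^2h_0^2k$, and $\phi_k$ is decreasing. The admissibility constraints $t\le2\phi_{k+1}h_0$ therefore reduce to the single binding one at $k=N$, and we take $t:=2h_0\phi_{N+1}$. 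If $h\le\tfrac12$ the claimed bound is the trivial $\mathcal E_N\ge0$, so we assume $h>\tfrac12$.

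Next we scale the free multiplier as $\phi_0:=c/(2s^2h_0^2N)$ with $c>0$ to be chosen, and write $K:=\eps^2N/(2R^2s^2)$. Then $\phi_{N+1}=c/\big(2s^2h_0^2N(1+c(N+1)/N)\big)$. The two leading terms of \eqref{eq:master} become
\[
  \eps tW=\frac{2c}{h(1+c)}\,K\,(1+O(1/N)),\qquad \phi_0R^2=\frac{c}{h^2}\,K ,
\]
using $h_0=hR^2/(\eps N)$. The remaining two terms are lower order. Since $\phi_k\le\phi_0$, we have $L^2\sum_{k\le N}\phi_kh_k^2\le L^2h_0^2(N+1)\phi_0=L^2c(N+1)/(2s^2N)=O(1)$. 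Similarly, $\tfrac d2\log(\phi_0/\phi_{N+1})=\tfrac d2\log(1+c(N+1)/N)=O(1)$. Both are $o(K)$ at fixed $c$.

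It remains to maximize $F(c):=2c/(h(1+c))-c/h^2$ over $c>0$. Substituting $u=1+c$ gives $F=2/h-2/(hu)-u/h^2+1/h^2$, which is maximized at $u=\sqrt{2h}$. This value satisfies $u>1$, i.e. $c>0$, exactly when $h>\tfrac12$. At this $u$ one computes $F=(2/h)+(1/h^2)-2\sqrt2/h^{3/2}=(\sqrt{2h}-1)^2/h^2$. Plugging $c=\sqrt{2h}-1$ into \eqref{eq:master} yields the claim, with the $O(1/N)$ correction to $\eps tW$ and the $O(1)$ terms absorbed into the $(1+o(1))$ factor. The only delicate point, which we expect to be the main obstacle, is choosing the auxiliary step $h_N$ and the multiplier $t$ so that the binding constraint sits at $\phi_{N+1}$ without loss. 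Taking $h_N=h_0$ costs only a factor $(N+1)/N$ inside $\phi_{N+1}$, which is harmless asymptotically.
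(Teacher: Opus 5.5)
Your proposal is correct and follows essentially the same route as the paper: the auxiliary step $h_N=h_0$, uniform weights, the multiplier $t=2h_0\phi_{N+1}$, a one-parameter scaling of $\phi_0$ optimized at $c=\sqrt{2h}-1$, and the trivial case $h\le\tfrac12$. The only difference is cosmetic. You normalize $\phi_0$ by $N$ rather than $N+1$, which moves the $O(1/N)$ slack from the $\phi_0R^2$ term, where the paper bounds it by an inequality, to the gap term, where you carry it as a $(1+O(1/N))$ factor.
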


\begin{proof}
Write $h_0:=hR^2/(\eps N)$ for the common step and take the auxiliary stepsize
$h_N:=h_0$, uniform weights $w_k\equiv1$, any $\phi_0>0$, and the multiplier
$t:=2\phi_{N+1}h_0$. These are admissible in Proposition~\ref{prop:cert} as the
stepsizes, weights and multiplier are nonnegative, and as \eqref{eq:riccati} makes
$1/\phi_k$ nondecreasing we have $\phi_{N+1}\le\phi_{k+1}$, so that
$t\,w_k=2\phi_{N+1}h_0\le2\phi_{k+1}h_k$ for every $k=0,\dots,N$. The
recursion \eqref{eq:riccati} reduces to
$1/\phi_{k+1}=1/\phi_0+2s^2h_0^2(k+1)$. Parametrize $\phi_0=u/(2s^2h_0^2(N+1))$
with $u>0$. The terms of \eqref{eq:master} evaluate separately as follows.

\emph{The gap term.} The solved recursion at $k=N$ gives
$1/\phi_{N+1}=(1+1/u)\,2s^2h_0^2(N+1)$, that is,
$\phi_{N+1}=\frac{u}{1+u}\cdot\frac{1}{2s^2h_0^2(N+1)}$, and with $W=N+1$,
\[
  \eps tW\;=\;2\eps\phi_{N+1}h_0(N+1)\;=\;\frac{\eps}{s^2h_0}\cdot\frac{u}{1+u}.
\]

\emph{The initial tilt.} By the parametrization of $\phi_0$,
\[
  \phi_0R^2\;=\;\frac{R^2u}{2s^2h_0^2(N+1)}.
\]

\emph{The $L^2$--term.} The tilts are nonincreasing, so
\[
  L^2\sum_{k=0}^N\phi_kh_k^2\;=\;L^2h_0^2\sum_{k=0}^N\phi_k
  \;\le\;L^2h_0^2(N+1)\,\phi_0\;=\;\frac{L^2u}{2s^2}.
\]

\emph{The dimension term.} Dividing the parametrization of $\phi_0$ by the
expression for $\phi_{N+1}$ above, $\phi_0/\phi_{N+1}=1+u$, so
\[
  \frac d2\log\frac{\phi_0}{\phi_{N+1}}\;=\;\frac d2\log(1+u).
\]

With $K:=\eps^2N/(2R^2s^2)$ we have
$\eps/(s^2h_0)=2K/h$ and $R^2/(2s^2h_0^2(N+1))\le K/h^2$, so
\[
  \mathcal E_N\;\ge\;\Big(\frac{2}{h}\cdot\frac{u}{1+u}-\frac{u}{h^2}\Big)K
  -\frac{L^2u}{2s^2}-\frac d2\log(1+u).
\]
For $h>\tfrac12$ the bracket is maximized at $u=\sqrt{2h}-1>0$, where it takes
the value $(\sqrt{2h}-1)^2/h^2$, and the trailing terms are $O(1)$ as
$N\to\infty$; for $h\le\tfrac12$ the claim is trivial.
\end{proof}

The certified exponent involves the noise only through the directional scale
$s$ of \eqref{eq:subg}, the dimension entering solely the lower--order
correction, and sits within a factor three of the ceiling of
Lemma~\ref{lem:constant}, whose prefactor is $3(\sqrt{2h}-1)_+^2/h^2$.
At the matched steps of \eqref{eq:liu-steps} ($h=2$) the corollary
certifies $\mathcal E_N\ge\eps^2N/(8R^2s^2)\,(1+o(1))$, improving
significantly on the guarantee of \cite{liu} discussed in the introduction.
The following result shows the certificate optimization problem \eqref{eq:certopt}
has a built--in ceiling, uniform over schedules.

\begin{proposition}\label{prop:selfdual}
For every $N$ and every schedule $h$, the value of the certificate design problem
\eqref{eq:certopt} is at most ${\eps^2(N+1)}/{(2R^2s^2)}.$
\end{proposition}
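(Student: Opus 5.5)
The plan is to discard the two nonnegative penalty terms in \eqref{eq:certopt}, reduce the gap term to a sum controlled by the tilt recursion alone, and then optimize over $\phi_0$.

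\emph{Discarding penalties.} The term $L^2\sum_k\phi_kh_k^2$ is nonnegative. By \eqref{eq:riccati} the sequence $1/\phi_k$ is nondecreasing, so $\phi_0\ge\phi_{N+1}$ and $\frac d2\log(\phi_0/\phi_{N+1})\ge0$. Dropping both terms, the value of \eqref{eq:certopt} is at most $\sup\{\eps tW-\phi_0R^2\}$ over admissible triples.

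\emph{Eliminating $t$ and $w$.} Summing the admissibility constraint $tw_k\le2\phi_{k+1}h_k$ over $k=0,\dots,N$ gives
\[
  tW\le2\sum_{k=0}^N\phi_{k+1}h_k .
\]
The right-hand side depends only on the schedule (including the free auxiliary $h_N$) and on $\phi_0$.

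\emph{Bounding the sum by $\phi_0$.} This is the key step. Write $a_k:=1/\phi_k$, so that \eqref{eq:riccati} reads $a_{k+1}-a_k=2s^2h_k^2$. Hence $h_k=\sqrt{(a_{k+1}-a_k)/(2s^2)}$ and
\[
  \phi_{k+1}h_k=\frac{\sqrt{a_{k+1}-a_k}}{\sqrt2\,s\,a_{k+1}} .
\]
By Cauchy--Schwarz over the $N+1$ indices,
\[
  \sum_{k=0}^N\frac{\sqrt{a_{k+1}-a_k}}{a_{k+1}}\le\sqrt{N+1}\,\Big(\sum_{k=0}^N\frac{a_{k+1}-a_k}{a_{k+1}^2}\Big)^{1/2}.
\]
Each summand satisfies $(a_{k+1}-a_k)/a_{k+1}^2\le\int_{a_k}^{a_{k+1}}a^{-2}\,da$, because $a\le a_{k+1}$ on the interval. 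The sum therefore telescopes to at most $1/a_0-1/a_{N+1}\le\phi_0$. Consequently
\[
  \eps tW\le\frac{2\eps}{\sqrt2\,s}\sqrt{(N+1)\phi_0}=\frac{\eps\sqrt{2(N+1)}}{s}\sqrt{\phi_0}.
\]
This bound holds uniformly in the schedule, which is exactly the uniformity the statement requires.

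\emph{Optimizing over $\phi_0$.} With $x:=\sqrt{\phi_0}>0$, the remaining objective is the concave quadratic $\frac{\eps\sqrt{2(N+1)}}{s}x-R^2x^2$. Its maximum is
\[
  \frac{\eps^2\,2(N+1)}{4R^2s^2}=\frac{\eps^2(N+1)}{2R^2s^2},
\]
which is the claimed ceiling. The only non-routine step is the Cauchy--Schwarz and integral-comparison bound in the previous paragraph. The remaining steps are bookkeeping, with care only that the auxiliary step $h_N$ is counted, giving $N+1$ terms in the Cauchy--Schwarz application.
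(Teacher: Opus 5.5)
Your proof is correct and follows the paper's argument essentially step for step: drop the nonnegative $L^2$ and $d$ terms, sum the admissibility constraint, pass to $\rho_k=1/\phi_k$, apply Cauchy--Schwarz over the $N+1$ indices, telescope to $\phi_0$, and maximize the concave quadratic in $\sqrt{\phi_0}$. Your integral comparison $\int_{a_k}^{a_{k+1}}a^{-2}\,da$ is exactly the paper's termwise bound $(\rho_{k+1}-\rho_k)/(\rho_k\rho_{k+1})=1/\rho_k-1/\rho_{k+1}$, only phrased differently.
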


\begin{proof}
Fix $(\phi_0,w,t)$ admissible in Proposition~\ref{prop:cert}.
Write $\rho_k:=1/\phi_k$, so $\rho_{k+1}=\rho_k+2s^2h_k^2$ and
$h_k=\sqrt{(\rho_{k+1}-\rho_k)/(2s^2)}$. Since $tw_k\le2\phi_{k+1}h_k$,
\[
  \eps tW\;\le\;2\eps\sum_{k=0}^{N}\phi_{k+1}h_k
  \;=\;\frac{2\eps}{\sqrt{2s^2}}\sum_{k=0}^{N}\frac{\sqrt{\rho_{k+1}-\rho_k}}{\rho_{k+1}}
  \;\le\;\frac{2\eps}{\sqrt{2s^2}}\,\sqrt{N+1}\,
  \Big(\sum_{k=0}^{N}\frac{\rho_{k+1}-\rho_k}{\rho_{k+1}^2}\Big)^{1/2}
\]
by Cauchy--Schwarz. As the sequence $\rho$ is positive and nondecreasing, termwise
$(\rho_{k+1}-\rho_k)/\rho_{k+1}^2\le(\rho_{k+1}-\rho_k)/(\rho_k\rho_{k+1})
=1/\rho_k-1/\rho_{k+1}$. The last sum hence telescopes:
$\sum_{k=0}^N(\rho_{k+1}-\rho_k)/\rho_{k+1}^2\le\sum_{k=0}^N(1/\rho_k-1/\rho_{k+1}) = 1/\rho_0-1/\rho_{N+1} \le 1/\rho_0=\phi_0$.
By dropping the (nonnegative) $L^2$-- and $d$--terms, the certified exponent
is hence at most
\[
  2\eps\sqrt{\frac{(N+1)\phi_0}{2s^2}}-\phi_0R^2
  \;\le\;\max_{\varphi>0}\Big(2\eps\sqrt{\tfrac{N+1}{2s^2}}\sqrt\varphi-R^2\varphi\Big)
  \;=\;\frac{\eps^2(N+1)}{2R^2s^2}. \qedhere
\]
\end{proof}

The proof of the main theorem is a direct evaluation of the certificate along the
stochastic harmonic schedule \eqref{eq:harmonic}. The certified exponent matches the ceiling of
Proposition~\ref{prop:selfdual} to leading order, so the stochastic harmonic
schedule asymptotically optimizes the certificate \eqref{eq:certopt} which suggests it is optimal. This suggestion is indeed shown to be correct in Section~\ref{sec:lower} via a matching impossibility result. 

\begin{proof}[Proof of Theorem~\ref{thm:main}]
Choose the auxiliary stepsize of Proposition~\ref{prop:cert} by the same formula,
$h_N:=R^2/(\eps m)$.
Let $\omega:=2s^2R^4/\eps^2$, so that $2s^2h_j^2=\omega/(N+m-j)^2$, and choose
$\phi_0:=(N+m)/\omega$. By \eqref{eq:riccati},
\[
  \frac1{\phi_k}=\frac1{\phi_0}+2s^2\sum_{j<k}h_j^2
  =\frac{\omega}{N+m}+\omega\sum_{j<k}\frac1{(N+m-j)^2}
  =\frac{\omega}{N+m}+\omega\!\!\sum_{i=N+m-k+1}^{N+m}\!\!\frac1{i^2},
\]
and the integral bounds, by monotonicity of $x\mapsto x^{-2}$,
\[
  \frac1{A+1}-\frac1{B+1}=\int_{A+1}^{B+1}\frac{dx}{x^2}
  \;\le\;\sum_{i=A+1}^{B}\frac1{i^2}
  \;\le\;\int_{A}^{B}\frac{dx}{x^2}=\frac1A-\frac1B
\]
applied with $A=N+m-k$ and $B=N+m$ give
\[
  \frac1{\phi_k}\;\le\;\frac{\omega}{N+m}
  +\omega\Big(\frac1{N+m-k}-\frac1{N+m}\Big)\;=\;\frac{\omega}{N+m-k}
\]
and
\[
  \frac1{\phi_k}\;\ge\;\frac{\omega}{N+m}
  +\omega\Big(\frac1{N+m-k+1}-\frac1{N+m+1}\Big)\;\ge\;\frac{\omega}{N+m-k+1},
\]
the last step as $1/(N+m)\ge1/(N+m+1)$. Inverting both bounds gives the
sandwich
\begin{equation}\label{eq:phisandwich}
  \frac{N+m-k}{\omega}\;\le\;\phi_k\;\le\;\frac{N+m-k+1}{\omega},\qquad 0\le k\le N+1 .
\end{equation}
\emph{Leading term.} By \eqref{eq:phisandwich},
$\phi_{k+1}h_k\ge R^2(N+m-k-1)/\big(\eps\omega(N+m-k)\big)
\ge\big(1-1/m\big)R^2/(\eps\omega)$ for all $k\le N$, so the uniform weights
$w_k\equiv1$ admit $t=\big(1-1/m\big)2R^2/(\eps\omega)$, and with
$R^2/\omega=\eps^2/(2s^2R^2)$,
\[
  \eps t(N+1)\;=\;\frac{\eps^2(N+1)}{s^2R^2}\Big(1-\frac1m\Big).
\]
\emph{Remaining terms.} $\phi_0R^2=(N+m)\eps^2/(2s^2R^2)$. For the $L^2$--term, by
\eqref{eq:phisandwich} and $(A+1)/A^2\le2/A$, valid for $A\ge1$ and applied
with $A=N+m-k\ge m\ge1$,
\[
  \sum_{k=0}^{N}\phi_kh_k^2
  \le\frac{R^4}{\eps^2\omega}\sum_{k=0}^{N}\frac{N+m-k+1}{(N+m-k)^2}
  \le\frac{1}{2s^2}\cdot2\!\!\sum_{i=m}^{N+m}\!\frac1i
  \;\le\;\frac1{s^2}\int_{m-1}^{N+m}\frac{dx}{x}
  \;=\;\frac1{s^2}\log\frac{N+m}{m-1}.
\]
For the $d$--term, the sandwich \eqref{eq:phisandwich} at $k=N+1$ gives
$\phi_{N+1}\ge(m-1)/\omega$, while $\phi_0=(N+m)/\omega$, so
$\frac d2\log(\phi_0/\phi_{N+1})\le\frac d2\log\big((N+m)/(m-1)\big)$. Assembling
\eqref{eq:master},
\[
  \mathcal E_N\;\ge\;\frac{\eps^2}{2s^2R^2}\Big(2(N+1)\big(1-\tfrac1m\big)-(N+m)\Big)
  -\Big(\frac{L^2}{s^2}+\frac d2\Big)\log\frac{N+m}{m-1},
\]
which is the display of the theorem.
\end{proof}

The subgradient methods considered in Equation \eqref{eq:method} are unprojected. Nevertheless, the results in this section can be
upgraded trivially to the projected iteration
$x_{k+1}=\Pi_X\big(x_k-h_k(g_k+e_k)\big)$ onto a closed convex set
$X\supseteq\{x:\|x-x_0\|\le R\}$. Indeed, the proof of
Proposition~\ref{prop:cert} uses the update \eqref{eq:method} only through the
squared--distance expansion leading to \eqref{eq:onestep}, which nonexpansiveness of
the Euclidean projection preserves. The certificate
\eqref{eq:master}, the design problem \eqref{eq:certopt},
Corollary~\ref{cor:liu} and Theorem~\ref{thm:main} therefore hold verbatim
for projected stochastic subgradient descent.

\subsection{Expected suboptimality gap}

The stochastic harmonic schedule does exactly what is asked of it: it
optimizes as reliably as possible up to accuracy $\eps$ and, anything beyond
being futile \cite{bottou}, no further. In fact, even in the absence of any
noise its gap on our resisting instance remains $\Omega(\eps)$.

\begin{lemma}\label{lem:price}
Let $N\ge2$ and run the stochastic harmonic schedule \eqref{eq:harmonic} with uniform averaging on the instance
$f(x)=\frac{\eps}{2R}\,|x-x_\star|\in\mathcal C$ with minimizer $x_\star=x_0+R$,
with zero noise. Then
\[
  f(x_A)-f_\star\;\ge\;\frac\eps4 .
\]
\end{lemma}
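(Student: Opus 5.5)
The plan is to dominate the deterministic trajectory by the noiseless ``always move right'' path, and to show that even this path covers on average less than half the distance $R$ to the minimizer. Write $z_k:=x_k-x_0$ and $\beta:=\eps/(2R)$. The instance is in $\mathcal C$ since $\beta\le L$ follows from $\eps<RL$, and $|x_0-x_\star|=R$. With zero noise the run is deterministic: $x_{k+1}=x_k-h_kg_k$ with $g_k=\beta\,\mathrm{sgn}(x_k-x_\star)\in\{\pm\beta\}$. The key observation is that I do not need to track whether the iterates overshoot $x_\star$. This matters because the total travel $\tfrac R2(H_{N+m}-H_m)$ can exceed $R$ for large $N$, so the iterates may cross $x_\star$ and then oscillate around it. I only need the one-sided bound $z_{k+1}-z_k=-h_kg_k\le h_k\beta$, which holds regardless of the sign of $x_k-x_\star$.

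Inserting the harmonic steps \eqref{eq:harmonic} gives $h_j\beta=\frac{R^2}{\eps(N+m-j)}\cdot\frac{\eps}{2R}=\frac{R}{2(N+m-j)}$. By induction from $z_0=0$,
\[
  z_k\;\le\;D_k:=\frac R2\sum_{j<k}\frac1{N+m-j},\qquad k=0,\dots,N .
\]

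Next I average with uniform weights and exchange the order of summation. The index $j$ appears in $D_k$ for the $N-j$ values $k=j+1,\dots,N$, so
\[
  x_A-x_0=\frac1{N+1}\sum_{k=0}^{N}z_k\;\le\;\frac1{N+1}\sum_{k=0}^{N}D_k
  =\frac R{2(N+1)}\sum_{j=0}^{N-1}\frac{N-j}{N+m-j} .
\]
Since $m\ge1$, each ratio $(N-j)/(N+m-j)$ is below $1$. Hence $x_A-x_0\le\frac R2\cdot\frac N{N+1}<\frac R2$.

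Finally, since $x_\star=x_0+R$, we get $x_\star-x_A>R/2>0$, and therefore $f(x_A)-f_\star=\beta|x_A-x_\star|\ge\beta R/2=\eps/4$.

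I expect no real obstacle. The one point needing care is the overshoot issue already noted: one must resist computing the exact trajectory, and a displacement bound valid for either subgradient sign is what makes the argument short. Without it one would have to analyze the oscillation phase, which is unnecessary.
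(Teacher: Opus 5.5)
Your proof is correct and follows the paper's route. It uses the same one-sided displacement bound $z_k\le\frac R2\sum_{j<k}(N+m-j)^{-1}$, valid whatever the subgradient sign, followed by uniform averaging to reach $x_A-x_0\le R/2$. The only difference is in bounding the average, and it is a simplification: you exchange the order of summation and bound each ratio $(N-j)/(N+m-j)$ by one, whereas the paper passes through two integral comparisons and evaluates $\int_0^{N+1}\log\frac{N+m}{N+m-x}\,dx$ explicitly.
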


\begin{proof}
Write $z_k:=x_k-x_0$ and $\beta:=\eps/(2R)\le L$. Each step moves the
iterate by at most $\beta h_j=R/\big(2(N+m-j)\big)$ in the positive direction, so
$z_k\le\frac R2\sum_{j<k}(N+m-j)^{-1}\le\frac R2\int_{0}^{k}\frac{dx}{N+m-x}=\frac R2\log\big((N+m)/(N+m-k)\big)$, and averaging,
\[
  x_A-x_0\;=\;\frac1{N+1}\sum_{k=0}^{N}z_k
  \;\le\;\frac R2\cdot\frac1{N+1}\sum_{k=0}^{N}\log\frac{N+m}{N+m-k}
  \;\le\;\frac R2\cdot\frac1{N+1}\int_0^{N+1}\log\frac{N+m}{N+m-x}\,dx
  \;\le\;\frac R2,
\]
the integral evaluating to $(N+1)-(m-1)\log\big((N+m)/(m-1)\big)\le N+1$.
Hence
$f(x_A)-f_\star=\beta\,(x_\star-x_A)\ge\beta R/2=\eps/4$.
\end{proof}

\section{A gradient masking impossibility result}\label{sec:lower}

We develop an impossibility result capping the failure exponent of any
algorithm, based on independent identically distributed Gaussian noise
$e_k\sim N(0,s^2)$, which saturates \eqref{eq:subg} with equality.
Under this stochastic model the noisy subgradients may mimic uninformative noise carrying no information about the problem instance, and hence no algorithm can hope to make any optimization progress.

\begin{theorem}\label{thm:universal}
  Let $\eps\le RL$ and let the noise be independent and Gaussian, $e_k\sim N(0,s^2)$. Every algorithm
in the oracle model of Section~\ref{sec:intro} satisfies, with
$K:=\eps^2N/(2R^2s^2)$,
\[
  \mathcal E_N\;\le\;K+\sqrt{6K}+\log6 .
\]
\end{theorem}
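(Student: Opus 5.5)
We would argue through a two-point, gradient-masking construction followed by the same change of measure used in Lemma~\ref{lem:classical}. Take the univariate instances $f_\pm(x)=\beta\,|x-x_\star^\pm|$ with slope $\beta:=\eps/R\le L$ and minimizers $x_\star^\pm:=x_0\pm R$. Both lie in $\mathcal C$. Each uses the selection $g(x)=\beta\,\mathrm{sgn}(x-x_\star^\pm)$, which equals $\mp\beta$ on the interval between the two minimizers.

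Under $f_\pm$, consider the tailored law $Q_\pm$ with noise $e_k=\pm\beta\,\mathrm{sgn}$-compensating drift plus fresh $N(0,s^2)$. Concretely, the drift is chosen as $\mu_k:=\pm\beta$, so that while $x_k$ stays in $(x_0-R,x_0+R)$ the noisy subgradient $g_k+e_k=\eta_k$ is pure Gaussian noise. The observations then carry no information about the sign.

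\emph{Masked outputs.} As long as the iterates stay between the minimizers, the runs under $(f_+,Q_+)$ and $(f_-,Q_-)$ have identical joint law of $(\omega,\tilde g_0,\dots,\tilde g_{N-1})$, and hence identical law of the output $x_A$. To avoid having to control escape from the interval, we define $Q_\pm$ adaptively: the drift is $\mu_k:=-g_k$, the negative of the subgradient actually returned at $x_k$. This keeps $|\mu_k|\le\beta$ and makes $g_k+e_k=\eta_k$ exactly at every step. The drift is $\mathcal F_k$--measurable, so the disintegration of Lemma~\ref{lem:classical} still applies with conditional means $\mu_k$, and the observed stream is i.i.d.\ $N(0,s^2)$ under both $Q_\pm$. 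Consequently $x_A$ has the same law $\nu$ under $Q_+$ and $Q_-$.

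\emph{Forced failure.} Since $f_+(x)+f_-(x)\ge\beta\cdot2R=2\eps$ for every $x$, each output point fails for at least one of the two instances. Therefore $Q_+(f_+(x_A)-f_\star\ge\eps)+Q_-(f_-(x_A)-f_\star\ge\eps)\ge1$, and one sign, say $+$, has failure probability at least $\tfrac12$ under $Q_+$.

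\emph{Change of measure.} Under $Q_+$ we have $\log(dP/dQ_+)=-\xi/s^2-E/(2s^2)$. Here $E=\sum_k\mu_k^2\le N\beta^2=N\eps^2/R^2=2Ks^2$, and $\xi=\sum_k\mu_k\eta_k$ is now a martingale rather than a Gaussian. Its conditional increments are Gaussian with variance $s^2\mu_k^2$, so its predictable quadratic variation is at most $s^2\cdot2Ks^2$. Chebyshev then gives $Q_+(\xi>\sqrt{6K}\,s^2)\le 2K s^4/(6Ks^4)=1/3$. By the Fréchet bound, the intersection of the failure event and $\{\xi\le\sqrt{6K}s^2\}$ has $Q_+$--probability at least $\tfrac12-\tfrac13=\tfrac16$.

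On that intersection, $dP/dQ_+\ge e^{-K-\sqrt{6K}}$. Hence $\Prob_P(\text{fail})\ge\tfrac16e^{-K-\sqrt{6K}}$, which is exactly $\mathcal E_N\le K+\sqrt{6K}+\log6$.

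The main obstacle is to make this adaptive measure rigorous. We must check that $Q_\pm$ is a legitimate law for which the likelihood-ratio factorization holds, and that the masked observation law is genuinely identical under both signs when the oracle's selections depend on the past. The key point is that the oracle sees only $\tilde g$, so the algorithm's decisions are measurable functions of $(\omega,\eta)$ alone. We would verify this by induction on $k$. The second-moment bound on $\xi$ then follows from the orthogonality of the martingale increments.
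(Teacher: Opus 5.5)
Your proposal is correct and is essentially the paper's proof. It uses the same mirrored instances $f_\pm(x)=\beta|x-x_\star^\pm|$ with $\beta=\eps/R$, the same adaptive gradient-canceling drift $\mu_k=-g_k$ that makes the observations pure noise so that $x_A$ has one law under $Q_\pm$, the same ``one sign fails with $Q$-probability at least $\tfrac12$'' step (your $f_++f_-\ge 2\eps$ is equivalent to the paper's disjoint success windows), and the same Chebyshev bound on the martingale part of $\log(dP/dQ)$ with second moment $2K$ followed by the Fr\'echet bound $\tfrac12+\tfrac23-1=\tfrac16$. The only cosmetic difference is that you bound the drift energy by $N\beta^2$, whereas the paper notes it equals $N\beta^2$ exactly because $|g_\pm|=\beta$ everywhere under the convention $\mathrm{sgn}(0)=1$.
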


\begin{proof}
Set $\beta:=\eps/R\le L$ and take the mirrored instances
\[
  f_\pm(x)\;:=\;\beta\,|x-x_\star^\pm|\;\in\;\mathcal C ,
  \qquad x^\pm_\star\;:=\;x_0\pm R ,
\]
with subgradient selections $g_\pm(x)=\beta\,\mathrm{sgn}(x-x^\pm_\star)$,
so that $|g_\pm(x)|=\beta$ everywhere. Let
$Q_\pm$ be the law of the run under the stochastic noise model in which,
conditionally on $\mathcal F_k$, the noise is distributed as
$e_k\sim N(-g_\pm(x_k),s^2)$; equivalently, $e_k=-g_\pm(x_k)+\eta_k$ with
$\eta_k\sim N(0,s^2)$ i.i.d., in other words the oracle only ever returns pure noise,
$\tilde g_k=\eta_k$. Under $Q_+$ and $Q_-$ the observations and
the internal randomness have the same joint law, carrying no information about the
instance, so the output $x_A$ has one law under both. Failure at level $\eps$ on
$f_\pm$ means $|x_A-x^\pm_\star|\ge\eps/\beta=R$; the success
windows $(0,2R)$ and
$(-2R,0)$ for $x_A-x_0$ are disjoint, so the two success probabilities sum to at most one:
\[
  \max\big\{\,Q_+(\text{failure on }f_+),\;Q_-(\text{failure on }f_-)\,\big\}
  \;\ge\;\tfrac12 .
\]
Fix a sign attaining this maximum and write $Q$ and ``failure'' for the
corresponding law and event. It remains to transfer this failure probability to
the honest noise law by a change of measure. Write $P$ for the honest Gaussian
law of the run, with noise $e_k\sim N(0,s^2)$. With $\mathcal F_k=\sigma(\omega,e_0,\dots,e_{k-1})$ the filtration of
Section~\ref{sec:intro}, the iterate $x_k$, and hence also the selection
$g_k:=g_\pm(x_k)$ for the fixed sign, is $\mathcal F_k$--measurable. Exactly as in
the change of measure of Lemma~\ref{lem:classical}, the shared coin marginal cancels
and $\log(dP/dQ)$ is the sum of the conditional density ratios; here the
$Q$--conditional of $e_k$ given $\mathcal F_k$ is $N(-g_k,s^2)$, with
$\mathcal F_k$--measurable mean $-g_k$, so with $\varphi_s$ the $N(0,s^2)$ density
and $e_k+g_k=\eta_k$ under $Q$,
\begin{align*}
  \log\frac{dP}{dQ}
  & =\;\sum_{k=0}^{N-1}\log\frac{\varphi_s(e_k)}{\varphi_s(e_k+g_k)}
  \;=\;\sum_{k=0}^{N-1}\frac{(e_k+g_k)^2-e_k^2}{2s^2}
  \;=\;\sum_{k=0}^{N-1}\frac{2\eta_kg_k-g_k^2}{2s^2}\\
  & =\;\frac1{s^2}\sum_{k=0}^{N-1}\eta_kg_k
  \;-\;\sum_{k=0}^{N-1}\frac{g_k^2}{2s^2}\;=:\;M-K .
\end{align*}
Here $|g_k|=\beta$, so
$K=\beta^2N/(2s^2)=\eps^2N/(2R^2s^2)$ is deterministic. By the tower property of
conditional expectation,
\[
  \E_Q\big[\eta_kg_k\big]
  =\E_Q\Big[\E_Q\big[\eta_kg_k\,\big|\,\mathcal F_k\big]\Big]
  =\E_Q\big[\E_Q[\eta_k\mid\mathcal F_k]\,g_k\big]
  =0
\]
for every $k$, since $g_k$ is $\mathcal F_k$--measurable and $\eta_k$ has
conditional mean zero under $Q$; summing over $k$ gives $\E_Q[M]=0$. The same
conditioning computes the variance. Expanding the square of $M$, a cross term with
$j<k$ vanishes because $\eta_jg_j$ is also
$\mathcal F_k$--measurable,
\[
  \E_Q\big[\eta_jg_j\,\eta_kg_k\big]
  =\E_Q\Big[\E_Q\big[\eta_jg_j\,\eta_kg_k
  \,\big|\,\mathcal F_k\big]\Big]
  =\E_Q\Big[\eta_jg_j\,
  \E_Q[\eta_k\mid\mathcal F_k]\,g_k\Big]
  =0 ,
\]
while a diagonal term follows from the conditional variance
$\E_Q[\eta_k^2\mid\mathcal F_k]=s^2$ and $|g_k|=\beta$,
\[
  \E_Q\big[(\eta_kg_k)^2\big]
  =\E_Q\Big[\E_Q\big[(\eta_kg_k)^2\,\big|\,\mathcal F_k\big]\Big]
  =\E_Q\Big[g_k^2\,\E_Q\big[\eta_k^2\,\big|\,\mathcal F_k\big]\Big]
  =s^2\beta^2 .
\]
Therefore
\[
  \E_Q\big[M^2\big]
  \;=\;\frac1{s^4}\sum_{k=0}^{N-1}\E_Q\big[(\eta_kg_k)^2\big]
  \;=\;\frac{\beta^2N}{s^2}\;=\;2K .
\]
A simple application of Chebyshev's inequality gives $Q(M\le-\sqrt{6K})\le2K/(6K)=\tfrac13$. Hence, finally, we get
\begin{align*}
  \Prob_P\big(\text{failure}\big)\;&=\;
  \E_Q\Big[\mathbf 1_{\{\text{failure}\}}\,\frac{dP}{dQ}\Big]\;\ge\;
  \E_Q\Big[\mathbf 1_{\{\text{failure}\}}\,
  \mathbf 1_{\{M>-\sqrt{6K}\}}\,\frac{dP}{dQ}\Big]\\
  &\ge\;\Big(\tfrac12-\tfrac13\Big)\,e^{-K-\sqrt{6K}}
  \;=\;\tfrac16\,e^{-K-\sqrt{6K}} ,
\end{align*}
where the final inequality bounds $dP/dQ=e^{M-K}\ge e^{-K-\sqrt{6K}}$ on the event
$\{M>-\sqrt{6K}\}$ and applies the Fr\'echet lower bound
$Q\big(\text{failure}\cap\{M>-\sqrt{6K}\}\big)\ge\tfrac12+\tfrac23-1$. Taking
logarithms gives the claim.
\end{proof}

Together with Theorem~\ref{thm:main} this settles the design problem of the
introduction. At fixed accuracy the optimal exponent is
$\mathcal E^\star_N=\eps^2N(1+ o(1))/(2R^2s^2)$ attained by the
stochastic harmonic schedule, exceeded by no algorithm. The constant $K$ matches,
moreover, the built--in ceiling of the certificate
(Proposition~\ref{prop:selfdual}) up to a single query, $\eps^2/(2R^2s^2)$.

\section{The small--noise limit $s\downarrow 0$}\label{sec:smallnoise}

The regimes considered so far hold the noise level $s$ fixed and let the horizon
$N$ grow. In some applications, however, the oracle noise may not be the dominant effect but instead only a small perturbation of an otherwise informative gradient, and the
pertinent question is not whether the method succeeds but how sensitive it is to a small amount of noise. We therefore fix the
horizon $N$ and study the failure exponent as $s\downarrow0$. This is the
classical small--noise regime of the large--deviation theory of \cite{freidlinwentzell} in which
failure becomes a rare event, and its exponent is set by the single most likely way
the vanishing noise can bring that failure about. Which perturbations are
capable of causing failure is an adversarial rather than a stochastic question. The exponent accordingly grows
like $\Lambda(\eps)^2/(2s^2)$, with a deterministic constant $\Lambda(\eps)$
that is introduced below and follows from the adversarial setting studied in
\cite{gosgens}.
The stochastic harmonic schedule
\eqref{eq:harmonic} was found by optimizing the supermartingale certificate of
Proposition~\ref{prop:cert}, whereas the adversarial analysis of \cite{gosgens}
found the schedule \eqref{eq:gvpavg} of a similar shape by optimizing against
deterministic budget restricted corruption. That two such seemingly unrelated
design problems should have solutions of the same shape is, in light of the previous discussion, no accident. At a fixed horizon the adversarial setting is precisely the small--noise limit of the stochastic one, as Proposition~\ref{prop:limit} below makes exact.

Consider any {deterministic} algorithm in the oracle
model of Section~\ref{sec:intro}. In particular, its output $x_A$ is a deterministic measurable
function of the observed subgradients $\tilde g_0,\dots,\tilde g_{N-1}$. Given the
instance and the oracle's selection, the output and its gap are then functions of
the noise path $e=(e_0,\dots,e_{N-1})$. Write $G_f(e)$ for the resulting
suboptimality gap, $\|e\|^2:=\sum_{k=0}^{N-1}\|e_k\|^2$ for the energy of a
path, and the worst gap over the instance class and all corruption paths of a
given energy as
\[
  G(\gamma)\;:=\;\sup\big\{G_f(e):\ f\in\mathcal C,\ \|e\|\le\gamma\big\},
\]
nondecreasing in the budget $\gamma$. For the averaged subgradient methods
of \cite{gosgens} this worst case admits a convex performance optimization
characterization. The {guarantee threshold}
\[
  \Lambda(\eps)\;:=\;\inf\big\{\|e\|:\ G_f(e)\ge\eps\ \text{for some }f\in\mathcal C\big\}
\]
is the least energy of a corruption path which drives the algorithm to
failure at level $\eps$. Two consequences of the definition are all that is
used below. First, every failing path has energy at least $\Lambda(\eps)$. Second,
$G(\gamma)<\eps$ forces $\Lambda(\eps)\ge\gamma$, whereas a failing path of
energy at most $\gamma$ gives $\Lambda(\eps)\le\gamma$. Like $G(\gamma)$ and
the exponent $\mathcal E_N$, the threshold $\Lambda(\eps)$ is a property of
the algorithm under consideration.
The bridge to the stochastic model is that the class \eqref{eq:subg} contains
laws under which a {prescribed deterministic corruption path $e^\star$ is realized
exactly}, and does so, for $s$ small, with probability nearly
$\exp\big(-\sum_{k=0}^{N-1}\|e^\star_k\|^2/(2s^2)\big)$. That is, no path is
impossible; the noise level $s$ merely dictates how improbable a path of a given
energy is.

\begin{lemma}\label{lem:atom}
Let $\mu\in\R^d\setminus\{0\}$, $\eta\in(0,1]$ and $\varsigma>0$, and let
$B_\eta$ be the constant of Lemma~\ref{lem:atomunit}. For every
$\varsigma\le\|\mu\|/B_\eta$ some distribution
$P_\mu$ on $\R^d$ with
\[
  \E_{P_\mu}[e]=0
  \qquad\text{and}\qquad
  \E_{P_\mu}\big[\exp\langle a,e\rangle\big]\le\exp\Big(\frac{\varsigma^2\|a\|^2}{2}\Big)
  \quad\text{for all }a\in\R^d
\]
has
\[
  P_\mu(\{\mu\})\;\ge\;e^{-(1+\eta)\|\mu\|^2/(2\varsigma^2)} .
\]
\end{lemma}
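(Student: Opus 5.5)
The plan is to reduce Lemma~\ref{lem:atom} to its one--dimensional unit version, Lemma~\ref{lem:atomunit}, by rotation and scaling. The constant $B_\eta$ is referenced there, so Lemma~\ref{lem:atomunit} presumably provides, for each $\eta\in(0,1]$ and every $\tau\ge B_\eta$, a centered law $p_\tau$ on $\R$ that is $1$--sub--Gaussian, $\E_{p_\tau}[e^{\lambda X}]\le e^{\lambda^2/2}$ for all $\lambda\in\R$, and has an atom at $\tau$ of mass at least $e^{-(1+\eta)\tau^2/2}$. I will use it in exactly this form. Write $v:=\mu/\|\mu\|$ and $\tau:=\|\mu\|/\varsigma$. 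The hypothesis $\varsigma\le\|\mu\|/B_\eta$ is precisely $\tau\ge B_\eta$, so the unit lemma applies.

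The construction is to let $X\sim p_\tau$ and set $e:=\varsigma X\,v$, so that $P_\mu$ is the image of $p_\tau$ under $x\mapsto\varsigma x v$. Then $\E[e]=\varsigma\E[X]v=0$. Since the map is injective, the atom of $p_\tau$ at $\tau$ maps to an atom at $\varsigma\tau v=\mu$, and
\[
P_\mu(\{\mu\})\ \ge\ e^{-(1+\eta)\tau^2/2}=e^{-(1+\eta)\|\mu\|^2/(2\varsigma^2)}.
\]
For the sub--Gaussian bound, fix $a\in\R^d$ and put $\lambda:=\varsigma\langle a,v\rangle$. Then
\[
\E\exp\langle a,e\rangle=\E e^{\lambda X}\le e^{\lambda^2/2}=e^{\varsigma^2\langle a,v\rangle^2/2}\le e^{\varsigma^2\|a\|^2/2}
\]
by Cauchy--Schwarz with $\|v\|=1$. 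This gives all three claims.

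The only real content is the one--dimensional construction, which I defer to Lemma~\ref{lem:atomunit}. If that lemma were not available in the form above, I would build $p_\tau$ myself. I would put mass $p=e^{-(1+\eta)\tau^2/2}$ at $\tau$ and compensate with a centered Gaussian--like remainder shifted by $-p\tau/(1-p)$, for instance $(1-p)\,N(-p\tau/(1-p),1-\delta)$ with a small variance deficit $\delta$. I would then check $pe^{\lambda\tau}+(1-p)\E e^{\lambda Y}\le e^{\lambda^2/2}$. The dangerous range is $\lambda$ near $\tau$, where the atom contributes $e^{\lambda\tau-(1+\eta)\tau^2/2}$, and this is at most $e^{\lambda^2/2}\cdot e^{-\eta\tau^2/2}$ after completing the square. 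The slack $e^{-\eta\tau^2/2}$ has to absorb the deficit of the remainder, and this is what requires $\tau\ge B_\eta$. This step is the main obstacle, but here it is taken as given.
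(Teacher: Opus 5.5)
Your proof is correct and essentially the paper's. The paper applies Lemma~\ref{lem:atomunit}, which is already stated on $\R^d$, directly to $\nu:=\mu/\varsigma$ and takes $P_\mu$ to be the law of $\varsigma Y$ with $Y\sim P_\nu$. You instead use its $d=1$ case at $\tau=\|\mu\|/\varsigma$ and carry out the lift along $\mu/\|\mu\|$ via Cauchy--Schwarz yourself, which is exactly what the paper does inside the proof of Lemma~\ref{lem:atomunit}; your fallback Gaussian-remainder sketch is not needed, since the paper builds that one-dimensional law from two atoms.
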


\begin{proof}
Set $\nu:=\mu/\varsigma$, so that $\|\nu\|\ge B_\eta$ by the hypothesis
$\varsigma\le\|\mu\|/B_\eta$. Lemma~\ref{lem:atomunit} applies to $\nu$ and
supplies a law $P_\nu$; let $P_\mu$ be the law of $\varsigma Y$ with
$Y\sim P_\nu$. Its mean vanishes, and for every $a\in\R^d$
\[
  \E\big[\exp\langle a,\varsigma Y\rangle\big]
  \;=\;\E\big[\exp\langle\varsigma a,Y\rangle\big]
  \;\le\;e^{\|\varsigma a\|^2/2}\;=\;e^{\varsigma^2\|a\|^2/2} ,
\]
and
$P_\mu(\{\mu\})=P_\nu(\{\nu\})\ge e^{-(1+\eta)\|\nu\|^2/2}=e^{-(1+\eta)\|\mu\|^2/(2\varsigma^2)}$.

\begin{lemma}\label{lem:atomunit}
Let $\eta\in(0,1]$. There is a constant $B_\eta\ge1$ depending only on $\eta$
such that for every $\nu\in\R^d$ with $\|\nu\|\ge B_\eta$ some
distribution $P_\nu$ on $\R^d$ with
\[
  \E_{P_\nu}[e]=0
  \qquad\text{and}\qquad
  \E_{P_\nu}\big[\exp\langle a,e\rangle\big]\le\exp\Big(\frac{\|a\|^2}{2}\Big)
  \quad\text{for all }a\in\R^d
\]
has
\[
  P_\nu(\{\nu\})\;\ge\;e^{-(1+\eta)\|\nu\|^2/2} .
\]
\end{lemma}

\begin{proof}
Write $x_+:=\|\nu\|$, the hypothesis reading $x_+\ge B_\eta\ge1$; in
particular $\nu\ne0$.

Let $X$ be a mean--zero univariate random variable with $\E[e^{tX}]\le e^{t^2/2}$
for every $t\in\R$, and take $P_\nu$ to be the law of $X\nu/x_+\in\R^d$. The
distribution $P_\nu$ is supported on the line $\R\nu$, it is mean--zero,
$P_\nu(\{\nu\})=\Prob(X=x_+)$, and for every $a\in\R^d$, writing
$t:=\langle a,\nu/x_+\rangle$ so that $|t|\le\|a\|$ by Cauchy--Schwarz since
$\nu/x_+$ is a unit vector,
\[
  \E\big[\exp\langle a,X\nu/x_+\rangle\big]=\E[e^{tX}]\le e^{t^2/2}\le
  e^{\|a\|^2/2} ,
\]
so $P_\nu$ meets both requirements of the lemma.

It remains to construct $X$, which we take supported on two points
$\{x_+,x_-\}$ with $x_-<0<x_+$. Such an $X$ is essentially determined by $x_+$
alone. Indeed, we demand $p:=e^{-(1+\eta)x_+^2/2}$ at $x_+$ and a vanishing
mean then forces the remaining mass $1-p$ to sit at $x_-:=-px_+/(1-p)$.
Choose now $B_\eta\ge1$, depending on $\eta$ alone, so large that
\begin{equation}\label{eq:atomcond}
  8px_+^2\;\le\;1
  \qquad\text{and}\qquad
  \frac1{2x_+^2}\;\ge\;-\log\big(1-e^{-\eta x_+^2/2}\big)
\end{equation}
hold whenever $x_+\ge B_\eta$. This is possible since, $p$ being
$e^{-(1+\eta)x_+^2/2}$, the left--hand side of the first condition tends to
zero, while in the second the right--hand side vanishes exponentially in
$x_+^2$ and the left--hand side only polynomially. 

The first condition in \eqref{eq:atomcond} gives $px_+^2\le1/8$ and, as $x_+\ge B_\eta\ge1$, also
$p\le1/(8x_+^2)\le1/8$. In particular $1-p\ge1/2$, so that
\[
  |x_-|\;=\;\frac{px_+}{1-p}\;\le\;2px_+ .
\]
\textbf{Case $|t|x_+\le1$.} We have $|tx_-|\le2p|t|x_+\le2p\le1$, so the
bound $e^u\le1+u+u^2$, valid for $|u|\le1$, applies at both atoms and
\[
  \E[e^{tX}]\;\le\;p\big(1+tx_++t^2x_+^2\big)+(1-p)\big(1+tx_-+t^2x_-^2\big)
  \;=\;1+t^2\big(px_+^2+(1-p)x_-^2\big),
\]
the linear term vanishing with the mean. As $x_-^2\le4p^2x_+^2\le px_+^2$ by
$4p\le1$, and $1-p\le1$, we conclude with $1+u\le e^u$ and $2px_+^2\le1/4$
that
\[
  \E[e^{tX}]\;\le\;1+2px_+^2t^2\;\le\;e^{2px_+^2t^2}\;\le\;e^{t^2/2}.
\]

\noindent\textbf{Case $tx_+\ge1$.} Completing the
square,
\[
  p\,e^{tx_+}\;=\;\exp\Big(\tfrac{t^2}2-\tfrac12\big(t-x_+\big)^2
  -\tfrac{\eta x_+^2}2\Big)\;\le\;e^{t^2/2}\,e^{-\eta x_+^2/2},
\]
while $(1-p)e^{tx_-}\le1$ as $t\ge0>x_-$, so
$\E[e^{tX}]\le e^{t^2/2}\big(e^{-\eta x_+^2/2}+e^{-t^2/2}\big)$, and the
bracket is at most one because $t^2/2\ge1/(2x_+^2)$ and the
second condition of \eqref{eq:atomcond}.

\noindent\textbf{Case $tx_+\le-1$.} $pe^{tx_+}\le p$ as $t\le0<x_+$,
while the other atom has $tx_-=|t|\,|x_-|\le2px_+|t|$. Hence
$\E[e^{tX}]\le p+(1-p)e^{2px_+|t|}\le p+(1-p)e^{t^2/2}\le e^{t^2/2}$, using
$2px_+\leq 4px_+\le|t|/2$, valid since $|t|\ge1/x_+$ and $4px_+^2\le1/2$.
\end{proof}
\end{proof}

With these laws in hand, the guarantee threshold characterizes the failure
exponent of any deterministic algorithm in the small--noise regime.

\begin{proposition}\label{prop:limit}
Fix $N$, $\eps\in(0,RL)$ and a deterministic algorithm with $0<\Lambda(\eps)<\infty$, and
write $\mathcal E_N(s)$ to display the dependence on the noise level. Then
\[
  \lim_{s\downarrow0}\ s^2\,\mathcal E_N(s)\;=\;\frac{\Lambda(\eps)^2}{2}\,.
\]
\end{proposition}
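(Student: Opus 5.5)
Write $\Lambda:=\Lambda(\eps)$. The claim splits into an upper bound on $\limsup s^2\mathcal E_N(s)$ (failure is not too improbable) and a lower bound on $\liminf s^2\mathcal E_N(s)$ (failure is improbable enough).

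\emph{Upper bound: $\limsup_{s\downarrow0}s^2\mathcal E_N(s)\le\Lambda^2/2$.} Fix $\delta>0$. By definition of $\Lambda$ there are an instance $f\in\mathcal C$ and a path $e^\star$ with $G_f(e^\star)\ge\eps$ and $\|e^\star\|^2\le(1+\delta)\Lambda^2$.

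Since the algorithm is deterministic and the oracle selection is fixed, the trajectory is a function of the noise path. I would build an adapted law as follows. At step $k$, if the past noise equals $e^\star_0,\dots,e^\star_{k-1}$, draw $e_k$ from the law $P_{\mu}$ of Lemma~\ref{lem:atom} with $\mu=e^\star_k$, $\varsigma=s$, and $\eta$ fixed small. Steps with $e^\star_k=0$ are set to zero noise, which is admissible. Otherwise draw $e_k\sim N(0,s^2I_d)$.

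Each conditional law is mean zero and obeys \eqref{eq:subg}. Lemma~\ref{lem:atom} applies for $s\le\min_{k:e^\star_k\ne0}\|e^\star_k\|/B_\eta$. Multiplying the conditional atom masses gives
\[
  \Prob(e=e^\star)\ge\exp\bigl(-(1+\eta)\|e^\star\|^2/(2s^2)\bigr).
\]
On this event the gap is at least $\eps$, so
\[
  s^2\mathcal E_N(s)\le(1+\eta)(1+\delta)\Lambda^2/2
\]
for all small $s$. Letting $\eta,\delta\downarrow0$ gives the upper bound.

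\emph{Lower bound: $\liminf_{s\downarrow0}s^2\mathcal E_N(s)\ge\Lambda^2/2$.} Every failing path has energy at least $\Lambda$. So, for every $f$ and every admissible $P$,
\[
  \Prob_P(\text{failure})\le\Prob_P\Bigl(\textstyle\sum_k\|e_k\|^2\ge\Lambda^2\Bigr).
\]
This reduces the problem to a uniform tail bound on the energy of an adapted conditionally sub-Gaussian sequence. I would use Lemma~\ref{lem:subg} with $u=0$ and $b<1/(2s^2)$, iterated through the tower property. This gives
\[
  \E\Bigl[\exp\Bigl(b\textstyle\sum_k\|e_k\|^2\Bigr)\Bigr]\le(1-2bs^2)^{-Nd/2}.
\]
Chernoff then yields
\[
  -\log\Prob\bigl(\|e\|^2\ge\Lambda^2\bigr)\ge b\Lambda^2+\tfrac{Nd}{2}\log(1-2bs^2).
\]
Take $b=(1-\theta)/(2s^2)$. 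Multiplying by $s^2$ gives at least
\[
  (1-\theta)\Lambda^2/2+s^2\tfrac{Nd}{2}\log\theta,
\]
whose last term vanishes as $s\downarrow0$ at fixed $N,d$. Letting $\theta\downarrow0$ gives the lower bound. This bound is uniform in $f$ and $P$, as the definition of $\mathcal E_N$ requires.

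\emph{Main obstacle.} The delicate step is the upper bound's construction. The adapted law must put the required atom mass on the whole prescribed path while keeping every conditional law admissible. Since $x_k$ is determined by the realized past noise, the path-dependent switching between the atom law and the Gaussian is measurable and preserves \eqref{eq:subg}. One must also check that the requirement $s\le\|e^\star_k\|/B_\eta$ holds uniformly, which is fine because there are finitely many nonzero $e^\star_k$. Finally, note that $0<\Lambda<\infty$ guarantees a failing path exists and that the limit is nontrivial.
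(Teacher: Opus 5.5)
Your proof is correct and follows the paper's route. The lower bound goes through Lemma~\ref{lem:subg} at $u=0$, the tower property and a Chernoff bound on the total energy $\sum_k\|e_k\|^2$. The upper bound places the atom laws of Lemma~\ref{lem:atom} on a near-optimal failing path.

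The differences are cosmetic. The paper simply draws the $e_k$ independently from $P_{e^\star_k}$, so your path-dependent switching to Gaussian noise off the prescribed prefix is unnecessary. The paper also uses the $s$-dependent choice $b=(1-Nds^2/\Lambda^2)/(2s^2)$, which gives an explicit bound at each small $s$, where you fix $\theta$, let $s\downarrow0$, and then send $\theta\downarrow0$.
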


\begin{proof}

\emph{Lower.} By definition of $\Lambda(\eps)$ every failing path has energy
at least $\Lambda(\eps)$, that is,
$\{G_f(e)\ge\eps\}\subseteq\{\|e\|^2\ge\Lambda(\eps)^2\}$ for every $f$. Lemma~\ref{lem:subg} at $a=0$ bounds each conditional moment,
$\E[\exp(b\|e_k\|^2)\mid\mathcal F_k]\le(1-2bs^2)^{-d/2}$ for $0\le b<1/(2s^2)$,
uniformly over the class. Writing $S_k:=\sum_{m<k}\|e_m\|^2$, the partial sum
$S_k$ is $\mathcal F_k$--measurable, so the tower property and this bound give
\[
  \E\big[\exp(bS_{k+1})\big]
  \;=\;\E\Big[\exp(bS_k)\,\E\big[\exp\big(b\|e_k\|^2\big)\,\big|\,\mathcal F_k\big]\Big]
  \;\le\;\big(1-2bs^2\big)^{-d/2}\,\E\big[\exp(bS_k)\big]
\]
for $k=0,\dots,N-1$. Iterating from $S_0=0$ yields
\[
  \E\Big[\exp\Big(b\sum_{k=0}^{N-1}\|e_k\|^2\Big)\Big]\;\le\;\big(1-2bs^2\big)^{-Nd/2}.
\]
As $b\ge0$, the event $\{\sum_{k=0}^{N-1}\|e_k\|^2\ge\Lambda(\eps)^2\}$ is
contained in $\{\exp(b\sum_{k=0}^{N-1}\|e_k\|^2)\ge\exp(b\Lambda(\eps)^2)\}$,
so Markov's inequality  gives
\[
  \Prob\Big(\sum_{k=0}^{N-1}\|e_k\|^2\ge\Lambda(\eps)^2\Big)
  \;\le\;e^{-b\Lambda(\eps)^2}\,\E\Big[\exp\Big(b\sum_{k=0}^{N-1}\|e_k\|^2\Big)\Big]
  \;\le\;e^{-b\Lambda(\eps)^2}\big(1-2bs^2\big)^{-Nd/2}.
\]
When $\Lambda(\eps)^2\ge Nds^2$ an admissible choice is
$b=\big(1-Nds^2/\Lambda(\eps)^2\big)/(2s^2)$ and guarantees
\[
  \Prob\Big(\sum_{k=0}^{N-1}\|e_k\|^2\ge\Lambda(\eps)^2\Big)
  \;\le\;\exp\Big(-\frac{\Lambda(\eps)^2}{2s^2}+\frac{Nd}2+\frac{Nd}2\log\frac{\Lambda(\eps)^2}{Nds^2}\Big).
\]
Hence, for $s\downarrow 0$ we have
\[
  s^2\mathcal E_N(s)\;\ge\;\frac{\Lambda(\eps)^2}2-\frac{Nds^2}2\Big(1+\log\frac{\Lambda(\eps)^2}{Nds^2}\Big)
  \;\longrightarrow\;\frac{\Lambda(\eps)^2}2.
\]

\emph{Upper.} Let $\gamma>\Lambda(\eps)$. As $\Lambda(\eps)$ is an infimum,
there are a path $e^\star$ of energy $\|e^\star\|^2\le\gamma^2$ and an
instance $f\in\mathcal C$ with $G_f(e^\star)\ge\eps$. Fix $\eta>0$ and draw the noise independently as
$e_k\sim P_{e^\star_k}$ from Lemma~\ref{lem:atom} at $\varsigma=s$ or deterministically $e_k\equiv 0$ where $e^\star_k=0$.

For
$s\le\min\{\|e^\star_k\|:e^\star_k\ne0\}/B_\eta$ the lemma applies at every
$k$ with $e^\star_k\ne0$, and independence collapses the conditional expectation in
\eqref{eq:subg} to the marginal,
$\E[\exp\langle a,e_k\rangle\mid\mathcal F_k]=\E[\exp\langle a,e_k\rangle]
\le\exp(s^2\|a\|^2/2)$, so this law lies in the class. Moreover, by independence
\[
  \Prob\big(e=e^\star\big)=\prod_k \Prob\big(e_k=e_k^\star\big)\;\ge\;\prod_k
  e^{-(1+\eta)\|e^\star_k\|^2/(2s^2)}\;\ge\;e^{-(1+\eta)\gamma^2/(2s^2)} .
\]
On the event $\{e=e^\star\}$ the run suffers gap at least
$\eps$, so $\mathcal E_N(s)\le(1+\eta)\gamma^2/(2s^2)$. Let $\eta\downarrow0$,
then $\gamma\downarrow\Lambda(\eps)$.
\end{proof}

Proposition~\ref{prop:limit} reduces the small--noise exponent of a method to
its guarantee threshold, that is, to the least energy of a corruption path
which makes it fail. Everything that remains is therefore a question about the adversarial
setting alone. We first bound the threshold of the adversarial harmonic
schedule from below, then the best threshold of any method from above.

\begin{lemma}\label{lem:uniform}
Let $RL/\sqrt{N+1}\le\eps'<\eps$. The adversarial harmonic schedule
\eqref{eq:gvpavg} matched at $\eps'$ has guarantee threshold
\[
  \Lambda(\eps)\;\ge\;L\,u^{-1}\Big(\frac{\eps'\sqrt{N+1}}{RL}\Big).
\]
\end{lemma}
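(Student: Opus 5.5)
The plan is to reduce the lower bound on $\Lambda(\eps)$ to the adversarial guarantee of \cite[Lem.~2]{gosgens}, using the second consequence of the definition of the threshold: if $G(\gamma)<\eps$ then $\Lambda(\eps)\ge\gamma$. It therefore suffices to show that the schedule \eqref{eq:gvpavg} matched at $\eps'$ keeps the worst gap $G(\gamma)$ strictly below $\eps$ for every budget $\gamma<\gamma_0:=L\,u^{-1}\big(\eps'\sqrt{N+1}/(RL)\big)$. Letting $\gamma\uparrow\gamma_0$ then gives $\Lambda(\eps)\ge\gamma_0$.

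\emph{Well-definedness.} First, the map $u:[0,\infty)\to[1,\infty)$ defined by \eqref{eq:udef} is a continuous increasing bijection, since $u\mapsto u^2-1-2\log u$ is increasing on $[1,\infty)$ and vanishes at $u=1$. The hypothesis $\eps'\ge RL/\sqrt{N+1}$ places the argument $\eps'\sqrt{N+1}/(RL)$ in $[1,\infty)$, so $\sigma_0:=u^{-1}(\eps'\sqrt{N+1}/(RL))$ is well defined and $\gamma_0=L\sigma_0$.

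\emph{The guarantee at the matched budget.} The matching in \eqref{eq:gvpavg} was done at exactly this $\sigma_0$, namely $RL\,u(\sigma_0)/\sqrt{N+1}=\eps'$, with the stepsizes built from $u(\sigma_0)$ and $\xi_N(\sigma_0)$. These stepsizes with uniform averaging realize the conic combination $\alpha'$ tuned for budget $L^2\sigma_0^2$. By \cite[Lem.~2]{gosgens}, for every $f\in\mathcal C$ and every error path with $\|e\|\le L\sigma_0=\gamma_0$ the output gap is at most $RL\,u(\sigma_0)/\sqrt{N+1}=\eps'$. Hence $G(\gamma_0)\le\eps'<\eps$. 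By monotonicity of $G$ this gives $G(\gamma)<\eps$ for all $\gamma\le\gamma_0$, and so $\Lambda(\eps)\ge\gamma_0$ directly, without needing a limit. This is where the slack $\eps'<\eps$ is used: it turns the non-strict guarantee into the strict inequality required by the threshold consequence.

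\emph{Main obstacle.} The delicate step is checking that the guarantee of \cite[Lem.~2]{gosgens} genuinely applies in the present oracle model. Three points need care:
\begin{itemize}
\item its error model takes a total budget on $\sum\|e_k\|^2$ with arbitrary, possibly adaptive, errors, and adaptivity must be allowed because $e^\star$ here is any path;
\item the subgradient selection is chosen by the oracle;
\item the unprojected method must reproduce their $\alpha'$ exactly, with the factor $\xi_N$ making the schedule admissible.
\end{itemize}
I would verify these by rewriting $x_A=x_0-\sum_k\alpha'_k(g_k+e_k)$ and observing that their performance bound is pathwise in $(g,e)$. If their lemma is stated only for budgets exactly $L^2\sigma^2$, I would argue that a smaller budget is a sub-case, since the supremum is over all paths within the budget.
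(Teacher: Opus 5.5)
Your proposal is correct and follows the paper's proof: set $\sigma'=u^{-1}\big(\eps'\sqrt{N+1}/(RL)\big)$, invoke \cite[Lem.~2]{gosgens} for the conic combination that the schedule \eqref{eq:gvpavg} realizes, conclude $G(L\sigma')\le\eps'<\eps$, and hence $\Lambda(\eps)\ge L\sigma'$. The applicability check you flag is handled in the paper exactly as you propose: the bound \cite[Equation (24)]{gosgens} holds pathwise for every fixed error path and depends on the schedule only through its conic combination $\alpha$.
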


\begin{proof}
Let $x_A$ be the uniform average of the iterates of
\eqref{eq:method} run with any steps $h_k\ge0$, and let $\alpha_k=h_k(N-k)/(N+1)$ be
its conic combination. Jensen's inequality applied to the uniform average gives
$f(x_A)-f_\star\le\frac1{N+1}\sum_k\big(f(x_k)-f_\star\big)$, and the
subgradient inequality between $x_k$ and $x_\star$ bounds each term by
$\langle g_k,x_k-x_\star\rangle$, so that
$f(x_A)-f_\star\le\frac1{N+1}\sum_k\langle g_k,x_k-x_\star\rangle$. Substituting
$x_k=x_0-\sum_{j<k}h_j(g_j+e_j)$ with $h_j/(N+1)=\alpha_j/(N-j)$,
\[
  f(x_A)-f_\star\;\le\;\frac1{N+1}\sum_{k=0}^{N}\langle g_k,x_0-x_\star\rangle
  \;-\!\!\sum_{0\le j<k\le N}\!\!\frac{\alpha_j}{N-j}\,\langle g_k,g_j+e_j\rangle
\]
for every $f\in\mathcal C$ and every error path \cite[Equation (24)]{gosgens}.
The right--hand side involves the schedule only through its conic combination
$\alpha$, and so does its worst case over $\|x_0-x_\star\|\le R$,
$\|g_k\|\le L$ and $\sum_{k=0}^{N-1}\|e_k\|^2\le\gamma^2$, the performance
estimate of \cite[Prop.~1]{gosgens}: two methods sharing a conic combination
share this bound on $G(\gamma)$.

Write $\sigma':=u^{-1}\big(\eps'\sqrt{N+1}/(RL)\big)$ for the budget matched
at $\eps'$. By its construction in Section~\ref{sec:intro}, the schedule
\eqref{eq:gvpavg} carries exactly the conic combination that
\cite[Lem.~2]{gosgens} tunes at $\sigma'$, and for which that lemma certifies
a gap of at most $RL\,u(\sigma')/\sqrt{N+1}=\eps'$ against every error path of
energy at most $L^2\sigma'^2$. Hence $G(L\sigma')\le\eps'<\eps$: no path of
energy at most $L^2\sigma'^2$ causes failure at level $\eps$, and
$\Lambda(\eps)\ge L\sigma'$.
\end{proof}

Every
deterministic algorithm satisfying the cone condition of \cite{fatkhullin},
\begin{equation}\label{eq:cone}
  x_k\;\in\;x_0-\mathrm{cone}\big(\tilde g_0,\dots,\tilde g_{k-1}\big)
  \quad\forall k\in[1,N],
  \qquad
  x_A\;\in\;x_0-\mathrm{cone}\big(\tilde g_0,\dots,\tilde g_{N-1}\big),
\end{equation}
suffers at every $\sigma\in[0,\sqrt N\,]$ a gap of at least
$G(L\sigma)\ge(1-\delta_N)RL\,u(\sigma)/\sqrt{N+1}$ on some instance and some error path
of energy at most $L^2\sigma^2$ \cite[Thm.~2 and Cor.~2]{gosgens}, with $u$ and $\delta_N$ as in the
introduction. The condition \eqref{eq:cone} is satisfied by the vast                                                               
majority of first--order methods, including every subgradient method with                                                                    
nonnegative steps as well as Nesterov's accelerated method and most variable--stepsize schemes \cite{gosgens}.
A notable exception in the context of saddle-point problems would be the recent work \cite{shugart2025negative}.

\begin{lemma}\label{lemma:small-noise-lb}
Let $N$ be large enough that $RL/\sqrt{N+1}\le\eps\le(1-\delta_N)RL$.
The best threshold $\Lambda^\star(\eps):=\sup\Lambda(\eps)$, the supremum
running over all deterministic algorithms satisfying \eqref{eq:cone}, obeys
\[
  \Lambda^\star(\eps)\;\le\;L\,u^{-1}\Big(\frac{\eps\sqrt{N+1}}{(1-\delta_N)RL}\Big).
\]
\end{lemma}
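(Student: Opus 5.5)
The plan is to fix an arbitrary deterministic algorithm satisfying \eqref{eq:cone}, exhibit a failing corruption path of controlled energy using the lower bound of \cite[Thm.~2 and Cor.~2]{gosgens} quoted just above the statement, and then read off the threshold bound from the second consequence of the definition of $\Lambda(\eps)$. That consequence says a failing path of energy at most $\gamma$ gives $\Lambda(\eps)\le\gamma$. Since the resulting bound will not depend on the algorithm, taking the supremum over algorithms gives the claim for $\Lambda^\star(\eps)$.

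\emph{Step 1: choose the budget.} Set
\[
  \sigma:=u^{-1}\Big(\frac{\eps\sqrt{N+1}}{(1-\delta_N)RL}\Big),
\]
so that $(1-\delta_N)RL\,u(\sigma)/\sqrt{N+1}=\eps$. For this to be well defined we need the argument to be at least $1$, because $u$ maps $[0,\infty)$ increasingly onto $[1,\infty)$. By \eqref{eq:udef}, the map $v\mapsto v^2-1-2\log v$ is increasing on $[1,\infty)$ and vanishes at $v=1$, so $u$ is a continuous increasing bijection and $u^{-1}$ exists on $[1,\infty)$. The hypothesis $\eps\ge RL/\sqrt{N+1}$ gives an argument $\ge1/(1-\delta_N)\ge1$.

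\emph{Step 2: check the admissible range.} The cited lower bound is only available for $\sigma\in[0,\sqrt N]$, so we must verify $\sigma\le\sqrt N$. This is the delicate step and is where the upper hypothesis $\eps\le(1-\delta_N)RL$ enters. That hypothesis bounds the argument by $\sqrt{N+1}$, so by monotonicity it suffices to show $u(\sqrt N)\ge\sqrt{N+1}$. Substituting $v=\sqrt{N+1}$ into \eqref{eq:udef} gives $v^2-1-2\log v=N-\log(N+1)\le N$. By monotonicity of $v\mapsto v^2-1-2\log v$, this yields $u(\sqrt N)\ge\sqrt{N+1}$, hence $\sigma\le\sqrt N$.

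\emph{Step 3: conclude.} Apply \cite[Thm.~2 and Cor.~2]{gosgens} at this $\sigma$. It produces an instance $f\in\mathcal C$ and an error path $e$ with $\|e\|\le L\sigma$ such that $G_f(e)\ge(1-\delta_N)RL\,u(\sigma)/\sqrt{N+1}=\eps$. This is a failing path at level $\eps$ of energy at most $L\sigma$, so $\Lambda(\eps)\le L\sigma$. The bound holds for every algorithm in the class, so taking the supremum yields the display.

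One subtlety needs care. The guarantee asserts $G(L\sigma)\ge\cdots$, where $G$ is defined as a supremum, and a supremum need not be attained. If the cited result only gives the supremum, we cannot directly extract a single failing path. In that case I would argue via $\sigma'$ slightly larger than $\sigma$. Strict monotonicity of $u$ gives $G(L\sigma')\ge(1-\delta_N)RL\,u(\sigma')/\sqrt{N+1}>\eps$, so some path of energy at most $L\sigma'$ fails, and hence $\Lambda(\eps)\le L\sigma'$. Letting $\sigma'\downarrow\sigma$ recovers the claim. This requires a little slack in the range, $\sigma'\le\sqrt N$. The slack is available whenever the inequality of Step 2 is strict, and it is: $N-\log(N+1)<N$ for all $N\ge1$.
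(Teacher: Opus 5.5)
Your proposal is correct and follows the paper's proof. Both invoke the lower bound of \cite[Thm.~2 and Cor.~2]{gosgens} at a budget $\sigma\le\sqrt N$ with $u(\sigma)\ge\eps\sqrt{N+1}/((1-\delta_N)RL)$, read off $\Lambda(\eps)\le L\sigma$, pass to $\sigma=u^{-1}$ of that quantity, and take the supremum over algorithms; the paper's ``let $\sigma$ decrease'' is the same limiting device as your fallback. Your Step~2 check that $u^{-1}\big(\eps\sqrt{N+1}/((1-\delta_N)RL)\big)\le\sqrt{N-\log(N+1)}<\sqrt N$ is where the hypothesis $\eps\le(1-\delta_N)RL$ actually enters; the paper leaves it implicit, and you verify it correctly.
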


\begin{proof}
Fix such an algorithm and a $\sigma\in[0,\sqrt N\,]$ with
$u(\sigma)\ge\eps\sqrt{N+1}/((1-\delta_N)RL)$. Then some instance and some
error path of energy at most $L^2\sigma^2$ produce a gap of at least $\eps$,
whence $\Lambda(\eps)\le L\sigma$. Let $\sigma$
decrease to $u^{-1}\big(\eps\sqrt{N+1}/((1-\delta_N)RL)\big)$, admissible as
$u$ is increasing and continuous, and take the supremum over algorithms.
\end{proof}

From Lemma \ref{lem:uniform} and continuity of $u^{-1}$, the adversarial harmonic schedule matched at $\eps'\uparrow\eps$ 
guarantees a budget $L\,u^{-1}\big(\eps\sqrt{N+1}/(RL)\big)$. The upper bound of Lemma~\ref{lemma:small-noise-lb}
exceeds this limit by a factor $1+O(\log N/N)$ only. Indeed, write
$y_N:=\eps\sqrt{N+1}/(RL)$ and $c_N:=(1-\delta_N)^{-1}\geq 1$, so that $y_N\ge2$ for all
$N$ large enough. Read in terms of the inverse, the defining relation
\eqref{eq:udef} states that $u^{-1}(v)^2=v^2-1-2\log v$ for every $v\ge1$.
Evaluating at $v=c_Ny_N$ and at $v=y_N$, both admissible as $c_Ny_N\ge y_N\ge1$, and
subtracting,
\begin{align*}
  u^{-1}(c_Ny_N)^2-u^{-1}(y_N)^2
  = & \big(c_N^2y_N^2-1-2\log y_N-2\log c_N\big)-\big(y_N^2-1-2\log y_N\big)\\
  = & (c_N^2-1)\,y_N^2-2\log c_N .
\end{align*}
The last term is nonpositive as $c_N\ge1$, so that
$u^{-1}(c_Ny_N)^2\le u^{-1}(y_N)^2+(c_N^2-1)\,y_N^2$. We may use $y_N^2\le\tfrac52\,u^{-1}(y_N)^2$ for $y_N\ge2$. Indeed,
substituting $u^{-1}(y_N)^2=y_N^2-1-2\log y_N$ this is
$y_N^2\le\tfrac52\,y_N^2-\tfrac52-5\log y_N$ which holds on $[2,\infty)$. Hence
\[
  u^{-1}(c_Ny_N)^2\;\le\;u^{-1}(y_N)^2\Big(1+\tfrac52\,(c_N^2-1)\Big),
\]
and hence  $u^{-1}(c_Ny_N)\;\le\;\big(1+O(\log N/N)\big)\,u^{-1}(y_N)$ as $c_N^2-1=O(\delta_N)=O(\log N/N)$.
The adversarial harmonic schedule is hence optimal among
methods satisfying \eqref{eq:cone} up to a relative $O(\log N/N)$ in guaranteed budget and, by
Proposition~\ref{prop:limit}, in small--noise rate.

\section{Numerical illustration}\label{sec:numerics}

Figure~\ref{fig:rates} reports a small numerical experiment on a member of the resisting oracle class used throughout,
$$f(x)=(\eps/R)\,|x-R|$$ with $x_0=0$, $x_\star=R$ and independent Gaussian noise. We estimate the
failure probability $\Prob\big(f(x_A)-f_\star\ge\eps\big)$ by Monte Carlo
($4\times10^6$ runs per point; $R=L=s=1$, $\eps=0.1$) for the stochastic harmonic
schedule \eqref{eq:harmonic}, the expectation--optimal constant steps \eqref{eq:expectation-optimal-steps}, the matched constant steps \eqref{eq:liu-steps}, and the anytime steps $h_k=R/(M\sqrt{k+1})$ of Lemma~\ref{lem:anytime}. On a
logarithmic scale the failure probability of the stochastic harmonic schedule decays
linearly in $N$ at a rate close to $\eps^2/(2R^2s^2)$, remaining below the
probability certified by Theorem~\ref{thm:main} and running, at the larger
horizons, roughly parallel to the worst--case floor of Theorem~\ref{thm:universal}. The two constant schedules decay at visibly
flatter rates and run close together over this horizon.
The anytime schedule decays at the flattest rate.

\begin{figure}[ht]
\centering
\begin{tikzpicture}
\begin{axis}[
  width=0.65\textwidth, height=0.5\textwidth,
  xlabel={$N$},
  ylabel={$\widehat{\Prob}\big(f(x_A)-f_\star\ge\eps\big)$},
  ymode=log,
  legend cell align=left,
  legend style={at={(1.03,0.5)}, anchor=west, draw=black, fill=none, font=\small},
  xmin=200, xmax=2100,
  xtick={0,500,1000,1500,2000},
  grid=major, grid style={dotted, gray!40},
]
\addplot+[thick, mark=*]         table[x=N, y expr=exp(-\thisrow{harmonic})] {julia/rates.dat};
\addplot+[thick, mark=square*]   table[x=N, y expr=exp(-\thisrow{constant})] {julia/rates.dat};
\addplot+[thick, mark=diamond*]  table[x=N, y expr=exp(-\thisrow{liu})]      {julia/rates.dat};
\addplot+[thick, mark=triangle*] table[x=N, y expr=exp(-\thisrow{anytime})]  {julia/rates.dat};
\addplot[dashed, thick]          table[x=N, y expr=exp(-\thisrow{ceiling})]  {julia/rates.dat};
\addplot[dotted, thick, restrict expr to domain={\thisrow{cert}}{0.001:1000}]
                                 table[x=N, y expr=exp(-\thisrow{cert})]     {julia/rates.dat};
\legend{stochastic harmonic \eqref{eq:harmonic}, constant steps \eqref{eq:expectation-optimal-steps}, constant steps \eqref{eq:liu-steps},
        anytime steps,
        floor (Thm.~\ref{thm:universal}), certificate (Thm.~\ref{thm:main})}
\end{axis}
\end{tikzpicture}
\caption{Monte Carlo failure probabilities (logarithmic scale) on our resisting
instance $f(x)=(\eps/R)\,|x-R|$ ($d=1$, $R=L=s=1$, $\eps=0.1$, $4\times10^6$ runs
per point) for the four schedules discussed in the paper, against the probability
certified by Theorem~\ref{thm:main} and the universal floor of
Theorem~\ref{thm:universal}.}
\label{fig:rates}
\end{figure}

\section{Conclusion}\label{sec:conclusion}

What should a subgradient method look like when its performance is measured by how
fast the probability that its suboptimality gap exceeds a tolerance $\eps$ decays
with the iteration number? For nonsmooth convex functions with
bounded subgradients corrupted with sub--Gaussian noise, the answer is the
stochastic harmonic schedule. Its failure probability
decays asymptotically at the exponential rate $\mathcal E_N = \eps^2N/(2R^2s^2)(1+o(1))$, which is optimal among all algorithms, whether adaptive or randomized.
In the small--noise limit $s\downarrow0$, finally, the problem collapses to the adversarial model studied by \cite{gosgens}. At fixed iteration number the best achievable failure exponent grows as
$\Lambda^\star(\eps)^2/(2s^2)$, where $\Lambda^\star(\eps)$ is the largest corruption budget
at which the adversarial version of the problem still admits an algorithm
keeping the suboptimality gap below $\eps$. The adversarial harmonic schedule of
\cite{gosgens} attains this budget up to a relative $O(\log N/N)$ loss among all methods satisfying
a natural cone condition \cite{gosgens,fatkhullin}.

The present work opens several directions for future research.
Within the nonsmooth setting considered here, a noise model in which the variance scale
$s$ of the noise model \eqref{eq:subg} grows with the size of the gradient is natural.
The associated analysis of convex smooth optimization also remains open. In the absence of noise, the optimal method there uses momentum and is indeed outside our memoryless iterate class \eqref{eq:method}.

\section*{Acknowledgments}

Bart P.G.\ van Parys gratefully acknowledges funding from NWO Vidi grant VI.Vidi.243.021.

\section*{AI Use Statement}

Claude (Anthropic) was used in the preparation of this manuscript for copy editing (improving exposition) and
brainstorming (exploring proof strategies and the numerical experiment).
All results, proofs and their verification are the responsibility of the author.

\clearpage

\bibliographystyle{plain}
\bibliography{main}

\end{document}